\numberwithin{equation}{section}
\definecolor{citegreen}{rgb}{0,0.4,0}
\definecolor{refred}{rgb}{0.5,0,0}
\theoremstyle{plain}
\newtheorem {theorem}{Theorem}[section]
\newtheorem {proposition} [theorem]{Proposition}
\newtheorem{definition}[theorem]{Definition}
\theoremstyle{remark}
\newtheorem{remark}[theorem]{Remark}
\DeclarePairedDelimiter\abs{\lvert}{\rvert}
\renewcommand{\theequation}{\arabic{section}.\arabic{equation}}
\newcommand{\R}{\mathbb R}
\newcommand{\N}{\mathbb N}
\renewcommand{\theta}{\vartheta}
\newcommand{\barint}
{\rule[.036in]{.12in}{.009in}\kern-.16in \displaystyle\int}
\newcommand{\dive}{{\mathrm{div}}}
\newcommand{\numberset}{\mathbb}
\renewcommand{\N}{\numberset{N}}
\renewcommand{\R}{\numberset{R}}
\newcommand{\Sf}{\numberset{S}}
\newcommand{\Heis}{\numberset{H}}
\newcommand{\ric}{\mathop {\rm Ric}\nolimits}
\newcommand{\dd}{{\,\rm d}}
\newcommand{\HH}{{\rm H}}
\newcommand{\hh}{{\rm h}}
\renewcommand{\phi}{\varphi}
\renewcommand{\epsilon}{\varepsilon}
\title[Substatic Riemannian manifolds and the Alexandrov Theorem]{New integral estimates in substatic Riemannian manifolds and the Alexandrov Theorem}
\author[M.~Fogagnolo]{Mattia Fogagnolo}
\address{M.~Fogagnolo, Centro di Ricerca Matematica Ennio De Giorgi, Scuola Normale Superiore, Piazza dei Cavalieri, 3, 56126 Pisa, Italy}
\email{mattia.fogagnolo@sns.it}
\author[A.~Pinamonti]{Andrea Pinamonti}
\address{A.~Pinamonti, Universit\`a degli Studi di Trento,
via Sommarive 14, 38123 Povo (TN), Italy}
\email{andrea.pinamonti@unitn.it}
\thanks{A. Pinamonti is partially supported
by the INdAM-GNAMPA 2020 project 
\emph{Convergenze variazionali per funzionali e operatori dipendenti da campi vettoriali.}}
\begin{document}

\begin{abstract}
We derive new integral estimates on substatic manifolds with boundary of horizon type, naturally arising in General Relativity. In particular, we generalize to this setting an identity due to Magnanini-Poggesi \cite{magnanini-poggesi1} leading to the Alexandrov Theorem in $\R^n$ and improve on a Heintze-Karcher type inequality due to Li-Xia \cite{Xia-substatic}. Our method relies on the introduction of a new vector field with nonnegative divergence, generalizing to this setting the P-function technique of Weinberger \cite{Wei_ARMA}.
\end{abstract}

\maketitle

\section{Introduction}
The characterization of hypersurfaces with constant mean curvature (and also of other functions of the principal curvatures) is an issue that, since the seminal papers of Alexandrov \cite{alexandrov-originale, alexandrov-originale2} generated a vast and fertile literature on related problems, both because of its intrinsic interest and also because it naturally links different topics, such as boundary value problems and Riemannian geometry. 

We recall some of the progresses sprung from this problem, in order to help framing our work. In the just mentioned initial papers of Alexandrov, the author showed, by means of a brilliant technique now known as \emph{Alexandrov moving planes}, that embedded closed hypersurfaces in $\R^n$ and in space forms (the hyperbolic space and the hemisphere) were shown to be only geodesic balls. Reilly, on the other hand,  proposed an alternative proof \cite{reilly-originale} based on an integral identity now known as Reilly's formula.
After that, inspired by some of the computations worked out by Weinberger \cite{Wei_ARMA} in order to yield an alternative, way easier proof to a special case of Serrin's Theorem about overdetermined problems \cite{Serrin_ARMA}, Reilly gave a third proof \cite{reilly-pfunz}. This approach was indeed ruled by the sub-harmonicity of Weinberger's function 
\[
P = \abs{\nabla u}^2 + \frac{2}{n} u,
\]
where $u$ is the solution to $\Delta u = -1$ in the bounded set $\Omega$ contoured by the hypersurface $\Sigma$ with constant mean-curvature with vanishing boundary value. Carefully retracing this path,  Magnanini and Poggesi \cite[Theorem 2.2]{magnanini-poggesi1} discovered the following remarkable integral identity, holding true for any bounded set $\Omega \subset \R^n$ with smooth boundary $\Sigma$ and mean curvature $\HH$
\begin{equation}
\label{magnanini-poggesif}
       \int_\Sigma  \abs{\nabla u}^2 (\overline{\HH} - \HH) \dd\sigma,
      = \int_\Omega  \left\vert \nabla \nabla u - \frac{\Delta u}{n} g_{\R^n}  \right\vert^2  \dd\mu  \, + \,
       \frac{(n-1)}{n} \frac{1}{R} \int_\Sigma (R - \abs{\nabla u})^2 \dd\sigma, 
 \end{equation}
where $\overline{\HH}$ and $R$ are the values achieved by the mean curvature of $\Sigma$ and $\abs{\nabla u}$ on $\Sigma$, respectively, on a ball of the same volume as $\Omega$. The Riemannian metric $g_{\R^n}$ is the classical flat metric of $\R^n$, while $\dd\sigma$ and $\dd\mu$ denote respectively the $n-1$-dimensional and the $n$-dimensional Hausdorff measures. The above equation  carries a quantitative information about hypersurfaces with almost constant mean curvature, since it shows that the (integral) deficit from being constantly mean-curved controls the $L^2$-norm of the traceless part of the Hessian of $u$, that actually vanishes if and only if $u$ is rotationally symmetric. In particular, \eqref{magnanini-poggesif} represents a generalization of the Alexandrov Theorem holding true for any bounded set with smooth boundary.

Again as a consequence of the sub-harmonicity of $P$, Magnanini and Poggesi got a version of an inequality now known as Heintze-Karcher inequality, originated in \cite{heintze-karcher} and rediscovered in the form that is usually meant in \cite[Theorem 1]{ros}, reading
\begin{equation}
    \label{magna-pg-heintze}
\frac{n-1}{n} \int_{\Sigma}\frac{1}{\HH} \dd\sigma \, \geq \, \abs{\Omega}
\end{equation}
for open subsets $\Omega$ with strictly \emph{mean-convex} (i.e. with strictly positive mean curvature) boundary $\Sigma$.
In fact, a $L^2$-quantity like that in \eqref{magnanini-poggesif} is found to appear as a deficit also the above inequality.

\medskip

\emph{The aim of the present paper is to extend this circle of ideas to the vast context of substatic manifolds with horizon boundary.} 

\smallskip

Leaving the rigorous (and actually more general) definitions to the next section, we consider Riemannian manifolds $(M, g)$ of dimension $n \geq 2$ endowed with a nonnegative smooth function $f$, with a compact, \emph{minimal} (i.e. with vanishing mean curvature) connected boundary $N = \{f = 0 \}$ that is a regular level set of $f$ (i.e. $\abs{\nabla f} > 0$ on $N$). This is substantially what we mean by \emph{horizon boundary}. Moreover, the curvature of $g$ satisfies the relation 
\begin{equation}
    \label{substatic-intro}
    f \ric  -  \nabla \nabla f + (\Delta f) g \geq 0,
\end{equation}
that is what \emph{substatic} stands for. We will refer to $f$ as \emph{substatic potential}. In order to discuss the relevance of this conditions, let us start from the \emph{static} case, that occurs when \eqref{substatic-intro} holds with equality at any point. Static Riemannian metrics naturally arise when considering solutions to the Einstein equations in the vacuum, that is, with vanishing stress-energy tensor. When the resulting Lorentzian manifold is also assumed to admit a global timelike Killing vector field with integrable orthogonal distribution, then time-slices are immediately seen to be static, with actually an additional partial differential constraint on the static potential that in turn implies the boundary to be of horizon-type. Discussing in details the deep connections with Mathematical General Relativity is far out the scope of this contribution, and so we refer the interested reader to the comprehensive thesis of Borghini \cite{stefano} and to the references therein. The substatic condition appear more generally in General Relativity in relation with the so called \emph{null convergence condition} \cite{wang-alexandrov}.

It was also observed by  Brendle \cite{Brendle-alexandrov} that this condition is naturally satisfied on warped product manifolds substantially fulfilling certain natural assumptions on the warping functions, that are clearly satisfied by the rotationally symmetric models of static metrics (de Sitter-Schwarzschild and anti de Sitter-Schwarzschild) as well as by the Reissner-Nordstr\"om metric. In particular, in \cite{Brendle-alexandrov} the author actually provides a version of the Alexandrov Theorem and of the Heintze-Karcher inequality for such warped product metrics. A generalization of the latter in the substatic setting has been obtained in \cite{Xia-substatic}, through the application of a suitable Reilly-type identity. In concluding their proof of the Heintze-Karcher inequality, Li-Xia introduced and considered the fundamental boundary value problem 
\begin{equation}
\label{torsion-pb-intro}    
\begin{cases}
\Delta u = - 1 + \frac{\Delta f}{f} u & \mbox{in} \,\, \Omega 
\\
\,\,\,\,\,u=c & \mbox{on}\,\, N
\\
\,\,\,\,\,u = 0 &\mbox{on} \,\, \Sigma,
\end{cases}
\end{equation}
for a suitable nonnegative constant $c$ on a bounded set $\Omega \subset M$ with smooth boundary given by $\partial \Omega = N \sqcup \Sigma$. This equation constitutes the core of our generalization of \eqref{magnanini-poggesif}. Namely, in our main result Theorem \ref{alexandrov-th} we prove that, under the additional assumption to be discussed below of existence of a strictly mean-convex hypersurface $S$ bounding a subset $\Omega$ with boundary $\partial \Omega = N \sqcup S$, we can choose 
\begin{equation}
    \label{def-cj-intro}
    c = \frac{\int_{N} \abs{\nabla f} \dd\sigma}{\int_{N}\abs{\nabla f}\left[\frac{\Delta f}{f} - \frac{\nabla \nabla f}{f} \left(\frac{\nabla f}{\abs{\nabla f}}, \frac{\nabla f}{\abs{\nabla f}}\right)\right] \dd\sigma}
    \end{equation}
    and that with this choice we have
\begin{equation}
    \label{main-id-intro}
    \begin{split}
     0 &\leq \int_\Omega  f \left\vert \nabla \nabla u - \frac{\Delta u}{n} g - u\left(\frac{\nabla \nabla f}{f} - \frac{\Delta f}{nf} g\right) \right\vert^2  + Q\left(\nabla u - \frac{u}{f} \nabla f, \nabla u - \frac{u}{f} \nabla f\right) \dd\mu \\ 
     & \qquad\qquad\qquad\qquad +  \frac{(n-1)}{n} \frac{1}{R} \int_\Sigma f(R - \abs{\nabla u})^2 \dd\sigma 
     =\int_\Sigma f \abs{\nabla u}^2 (\overline{\HH} - \HH) \dd\sigma,
    \end{split}
    \end{equation}
    where
   $Q = f\ric - \nabla\nabla f +\Delta f g \geq 0$
    and
    \begin{equation}
        \label{ReH-intro}
        R= \frac{\int_\Omega f\dd\mu + c\int_{N} \abs{\nabla f}\dd\sigma}{\int_{\Sigma} f \dd\sigma} \qquad \overline{\HH} = \frac{(n-1)}{n} \frac{1}{R}.
    \end{equation}
It is then easy to show that if $\HH \geq \overline{\HH}$ on the whole hypersurface $\Sigma$ then it is totally umbilical. To our knowledge, the above identity is new also in the more symmetric case of the warped product metrics considered in \cite{Brendle-alexandrov}. We included an Appendix where we show how Brendle's Alexandrov Theorem  
can be derived from the results contained in our paper. In fact, our notion of substatic metrics with horizon boundary (Definitions \ref{substatic-def} and \ref{horizon}) holds true also for empty horizon boundaries, in order to encompass also the hemisphere and the hyperbolic space as well as the huge and thoroughly studied class of complete manifolds with nonnegative Ricci curvature.

As for \eqref{magnanini-poggesif}, the relation \eqref{main-id-intro} naturally yields a natural deficit for $\Sigma$ from being, in loose integral sense, of constant mean curvature $\overline{\HH}$. Namely, by applying the H\"older inequality to \eqref{main-id-intro}, we get
\begin{equation}
    \label{quantitative}
    \big\vert\big\vert \HH - \overline{\HH} \big\vert\big\vert_{L^{p}(\Sigma)}  \geq \frac{\int_\Omega  f \left\vert \nabla \nabla u - \frac{\Delta u}{n} g - u\left(\frac{\nabla \nabla f}{f} - \frac{\Delta f}{nf} g\right) \right\vert^2  + Q\left(\nabla u - \frac{u}{f} \nabla f, \nabla u - \frac{u}{f} \nabla f\right) \dd\mu}{\vert\vert f \abs{\nabla u}^2 \vert\vert_{L^{p'}(\Sigma)}},
\end{equation}
where $p \geq 1$ and $p'$ is its H\"older conjugate.
In flat $\R^n$, where the numerator on the right hand side reduces to the $L^2$-norm of the traceless hessian of $u$, inequalities of the above form have been exploited in order to get a more quantitative description in terms of a suitable distance between $\Sigma$ and a suitable reference ball, see  \cite{magnanini-poggesi1,magnanini-poggesi2, magnanini-poggesi3,ciraolo-maggi,vesa}. In the hyperbolic space and in the hemisphere, this approach has been generalized in the recent \cite[Theorem 1.5]{scheuer-alexandrov}, where the role of the $L^2$-norm of the traceless Hessian of a suitable function of $u$ has been largely discussed and exploited. Providing similar results using the deficit in \eqref{quantitative} looks quite intriguing, as it would for example yield a complete quantitative description of Brendle's Alexandrov Theorem \cite{Brendle-alexandrov} in substatic warped products. Concerning other quantitative versions of the Alexandrov Theorem, this time obtained through the Alexandrov moving planes method, we mention the sharp \cite{ciraolo-vezzoni-alexandrov} in flat $\R^n$ and \cite{ciraolo-vezzoni-hyperbolic,ciraolo-roncoroni-spaceforms} about space forms. 

\medskip

As already mentioned, we prove our result by providing a suitable generalization of the $P$-function computation of Weinberger as a sort of pointwise alternative to Li-Xia's Reilly-type integral identity \cite{Xia-substatic}. What we discover is a new vector field $X$ that in the substatic case has nonnegative divergence, that substantially coincides with the integrand in the numerator on the right hand side of \eqref{quantitative}. Such vector field coincides with $\nabla P$ on manifolds with nonnegative Ricci curvature, and it is nontrivially linked  (Remark \ref{XeP}) with the  P-function of constant curvature spaces \cite{qiu-xia-pfunz,Ciraolo-Vezzoni-Serrin}.
In static and substatic manifolds with harmonic potential, a very different vector field with nonnegative divergence have been found and utilized for different aims in \cite{Ago_Maz_2,agostiniani-mazzieri-oronzio}.
Identity \eqref{main-id-intro} follows very naturally from applying the Divergence Theorem to the vector field $X$ in $\Omega$ (Proposition \ref{integral-id}) and working out some basic algebraic manipulations. A contradiction argument exploiting the existence of a strictly mean-convex hypersurface $S$ yields the positivity of the constant in \eqref{def-cj-intro}, see Proposition \ref{boundary-positive}. 
Observe that this is a purely geometric consequence obtained for the horizon boundary of a substatic manifold, that to our knowledge has only been pointed out without an explicit proof in \cite[Theorem 1.6]{wang-brendle}, in the special static geometry. In such context the result appears more elegant, and it is discussed in Remark \ref{remark-static} 
We also provide a geometric condition ensuring \emph{a priori} the existence of strictly mean-convex hypersurfaces like these, see Proposition \ref{existence-convex}, relying on the resolution of the least area problem recently considered in \cite{Fog_Maz}. Without additional effort, the Heintze-Karcher-type inequality for strictly mean-convex $\Sigma$
\begin{equation}
    \label{heintze-vera-intro}
     \frac{n-1}{n} \int_{\Sigma}\frac{f}{\HH} \dd\sigma \geq\int_{\Omega} f \dd\mu +  c \int_{N} \abs{\nabla f} \dd\sigma
\end{equation}
is derived, again with an explicit deficit, see Theorem \ref{heintze-karcher}. The constant $c$ is again given by \eqref{def-cj-intro}, and it is well defined as $\Sigma$ itself provides the strictly mean-convex hypersurface required. An analogous inequality in substatic manifolds is provided in \cite[Theorem 1.3]{Xia-substatic}, as already said as an application of a Reilly-type identity. However, the constant $c$ considered there is not known in general to be positive, and it can be checked that even when this is the case, it is a priori smaller then ours. On the other hand, they do coincide in the warped product models considered in \cite{Brendle-alexandrov}.

\bigskip

We close this introduction with a brief summary. In Section \ref{sec2}, after having introduced the setting and the basics about problem \eqref{torsion-pb-intro}, we introduce the key vector field $X$. In Section \ref{sec3}, we work out the main results of the paper, already discussed above. Moreover, in Subsection \ref{subsec}, we derive stronger rigidity statements when \eqref{heintze-vera-intro} holds with equality under additional geometric assumptions. In the Appendix, we discuss Brendle's Alexandrov Theorem in connection with the present work.



\bigskip

\bigskip
\emph{After the manuscript has been uploaded on ArXiv, Prof. C. Xia explained us that the fact that the vector field $X$ defined in \eqref{fieldX} has nonnegative divergence could be also deduced from the computations in \cite{Xia-substatic, li-xia-affine}.}  

\bigskip

\textbf{Acknowledgements.} The authors would like to thank R. Magnanini, L. Mazzieri and G. Poggesi for their interest in the present paper. Moreover, the authors are indebted to J. Li and C. Xia for having pointed out the papers \cite{qiu-xia-pfunz, li-xia-affine}  and for the very interesting discussions we had.

\section{A vector field with nonnegative divergence in substatic manifolds}
\label{sec2}
Let us provide the rigorous definitions we are going to employ for substatic manifolds with horizon boundary. For all the duration of the paper, \emph{we are considering Riemannian manifolds of dimension $n \geq 2$}.
\begin{definition}[Horizon boundaries]
\label{horizon}
We say that a  Riemannian manifold $(M, g)$ endowed with a smooth nonnegative function $f \in C^{\infty}(M)$ has \emph{horizon boundary} if $\nabla \nabla f /f$ extends continuously  at points $x \in M$ where $f(x) = 0$ and one of the following alternatives occurs.
\begin{itemize}
\item[(i)] The function $f$ is \emph{proper} and the boundary $(N, g_N)$ of $(M, g)$ is a \emph{minimal,} smooth, closed $(n-1)$-dimensional Riemannian submanifold such that $N = \{f = 0\}$ and it is a regular level set for $f$. In this case, we denote with $N_1, \dots, N_l$, with $l \in \N$, the connected components of $N$.
    \item[(ii)] The boundary of $(M, g)$ is empty and $f$ is strictly positive.   
\end{itemize}
\end{definition}
A couple of comments are in order. There is no completeness request in the definition above. This is not anecdotic at all, since it allows to consider fundamental examples like the De Sitter-Schwarzschild metric, that is a specific warped product of the type $([0, \overline{\rho}), d\rho \otimes d\rho + h^2(\rho) g_N$, where $N$ is a closed $n-1$ dimensional closed hypersurface  that has horizon boundary $\{f = 0\} = \{\rho = 0\} = N$, where $f = h'(\rho)$. This metric is incomplete as $\rho \to \overline{\rho}^-$.

The very same thing happens for the hemisphere, that we consider devoid of the equatorial $n-1$-dimensional hypersphere. It is again a warped product of the form above with a different warping function $h$, with $f = h'(\rho)$. In this case, the metric closes smoothly as $\rho \to 0^+$. The boundary is thus empty and  (ii) in the above definition is satisfied. 

These metrics are also \emph{substatic}. 
\begin{definition}[Substatic metrics]
\label{substatic-def}
Let $(M, g)$ be a Riemannian manifold endowed with a nonnegative function $f \in C^{\infty}(M)$, and with horizon boundary. We say that it is \emph{substatic} if
\begin{equation}
    f \ric - \nabla \nabla f + \Delta f g \geq 0
    \end{equation}
    on the whole of $M$.
    In this case, we say that $f$ is the \emph{substatic potential} of $(M, g)$.
\end{definition}
In addition to the substatic warped products considered in \cite{Brendle-alexandrov} (see also the Appendix), we mention that \emph{complete} noncompact manifolds with nonnegative Ricci curvature and without boundary fulfil the assumptions above by choosing $f$ to be a positive constant.

\subsection{The torsion-like function}
Let $(M, g)$ be a  Riemannian manifold endowed with a nonnegative function $f \in C^{\infty} (M)$ and with horizon boundary $(N, g_N)$. 
Our first concern consists in providing, under such assumption, the basic properties of solutions to 
\begin{equation}
\label{torsion-pb}    
\begin{cases}
\Delta u = - 1 + \frac{\Delta f}{f} u & \mbox{in} \,\, \Omega 
\\
\,\,\,\,\,u=c_j & \mbox{on}\,\, N_j
\\
\,\,\,\,\,u = 0 &\mbox{on} \,\, \Sigma,
\end{cases}
\end{equation}
where $\Omega \subset M$ is a bounded open subset with $C^{2,\alpha}$- boundary such that $\partial \Omega = \Sigma \sqcup (\cup_{j \in J} N_j)$ for $J \subseteq \{1, \dots, n\}$, with  $\Sigma \cap N = \emptyset$ and $c_j > 0$ are positive constants.
In case $\Omega \Subset M \setminus N$ (that is $J = \emptyset$), or  $N$ were empty (that is, (ii) occurs in Definition \ref{horizon}),  then we agree that the problem \eqref{torsion-pb} reduces to 
\begin{equation}
\label{torsion-easy}    
\begin{cases}
\Delta u = - 1 + \frac{\Delta f}{f} u & \mbox{in} \,\, \Omega 
\\
\,\,\,\,\,u = 0 &\mbox{on} \,\, \Sigma.
\end{cases}
\end{equation}
In the following statement we  observe that \eqref{torsion-pb} admits a positive smooth solution in $\Omega$ that is $C^{2, \alpha}$ up to the boundary, and that also enjoys a Hopf-type property on $\Sigma$. The nonstandard maximum principle lying behind the nonnegativity and the Hopf-property of $u$ follows from an argument that is inspired from a similar one in the proof of \cite[Lemma 2.4]{Ciraolo-Vezzoni-Serrin}, in the special geometry of a hemisphere.
\begin{theorem}
\label{existence}
Let $(M, g)$ be a Riemannian manifold endowed with a nonnegative smooth function $f \in C^{\infty}(M)$ with horizon boundary. Let $\Omega \subset M$ be a bounded subset with $C^{2, \alpha}-boundary$ such that $\partial \Omega = \Sigma \sqcup (\cup_{j \in J} N_j)$ with $J \subset \{1, \dots, l\}$, and $\Sigma \cap N = \emptyset$.  Then,
there exists an unique solution $u \in C^{\infty}(\Omega) \cap C^{2,\alpha}(\overline{\Omega})$ to \eqref{torsion-pb}. Moreover, $u > 0$ in $\Omega$ and $\Sigma = \{u = 0\}$ is a regular level set of $u$. In fact,
we have
\begin{equation}
\label{hopf}
\frac{\partial u}{\partial \nu}(x) < 0     
\end{equation}
at any $x \in \Sigma$, and $\nu$ is unit normal to $\Sigma$ in $x$ pointing outside of $\Omega$. In particular, $\nu = - \nabla u / \abs{\nabla u}$.
\end{theorem}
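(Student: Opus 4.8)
The plan is to recast the problem as the single linear elliptic equation $Lu := \Delta u - \tfrac{\Delta f}{f}u = -1$ and to exploit that the potential $f$ is, in the interior, a positive solution of the associated homogeneous equation. By the horizon assumption $\nabla\nabla f/f$ extends continuously to $N$; since a finite limit together with $f\to 0$ forces $\nabla\nabla f$ to vanish on the regular level set $N = \{f=0\}$, the division lemma gives in fact $\nabla\nabla f/f\in C^\infty(\overline\Omega)$, so that the coefficient $\Delta f/f = \mathrm{tr}_g(\nabla\nabla f/f)$ is smooth up to the boundary and $L$ is uniformly elliptic with smooth coefficients on the compact $\overline\Omega$. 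The crucial algebraic fact is $Lf\equiv 0$ where $f>0$; equivalently, the ground-state-type substitution $u = fw$ converts the equation into the divergence-form equation without zeroth order term
\[
\dive\!\left(f^2\nabla w\right) = -f \qquad\text{in } \Omega,
\]
because $\dive(f^2\nabla w) = f\,Lu$. This reformulation is the workhorse for every assertion: the operator $w\mapsto\dive(f^2\nabla w)$ obeys the standard maximum and minimum principles, the obstruction caused by the indefinite sign of $\Delta f/f$ having been absorbed into the degenerate but nonnegative weight $f^2$.

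For uniqueness I would take two solutions $u_1,u_2$, set $v = u_1 - u_2$ and $z = v/f$, and observe that $\dive(f^2\nabla z) = 0$ in $\Omega$ with $v = 0$ on all of $\partial\Omega$. Since $f$ vanishes exactly to first order on the regular level set $N_j$ and $v$ vanishes on $N_j$, the quotient $z = v/f$ extends to a Lipschitz function up to $N_j$ (a division lemma), so the energy identity $\int_\Omega f^2\abs{\nabla z}^2 = \int_{\partial\Omega} f^2 z\,\tfrac{\partial z}{\partial\nu}$ has vanishing boundary contributions: on $\Sigma$ because $z=0$, and on each $N_j$ because the weight $f^2$ kills the bounded factor $z\,\tfrac{\partial z}{\partial\nu}$. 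Hence $z$ is constant, and being $0$ on $\Sigma$ it vanishes identically, giving $v\equiv 0$. With uniqueness in hand, existence follows from standard Schauder theory together with the Fredholm alternative: after lifting the constant Dirichlet data (value $c_j$ on $N_j$, value $0$ on $\Sigma$) by a fixed $C^{2,\alpha}(\overline\Omega)$ function, the homogeneous Dirichlet problem for $L$ has trivial kernel, so the inhomogeneous one is solvable in $C^{2,\alpha}(\overline\Omega)$; the indefinite sign of the zeroth order term is immaterial since the Fredholm index is zero. Interior smoothness $u\in C^\infty(\Omega)$ is then immediate from elliptic regularity and $f\in C^\infty$.

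Positivity and the Hopf property both come from the superharmonicity of $w = u/f$. In the interior $\dive(f^2\nabla w) = -f \le 0$, so $w$ is a supersolution of a uniformly elliptic (on compact subsets) divergence-form operator with no zeroth order term, whence it obeys the minimum principle. Its boundary behaviour is decisive: $w = 0$ on $\Sigma$ (where $f>0$), while $w = u/f \to +\infty$ along $N_j$, since $u\to c_j > 0$ and $f\to 0^+$ there. Exhausting $\Omega$ by the sets $\{f>\epsilon\}$, whose boundaries consist, for small $\epsilon$, of $\Sigma$ and $\{f=\epsilon\}$, the minimum principle places the minimum of $w$ on $\Sigma$ (as $w$ is large on $\{f=\epsilon\}$), so $w\ge 0$ throughout; the strong minimum principle, $w$ being nonconstant, upgrades this to $w>0$ in $\Omega$, i.e. $u = fw > 0$. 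For the Hopf property I would work near $\Sigma$, where $f$ is smooth and positive so $w$ is smooth up to the boundary, $w>0$ in $\Omega$ and $w=0$ on $\Sigma$; the interior ball condition at the $C^{2,\alpha}$ hypersurface $\Sigma$ and the Hopf lemma for $\dive(f^2\nabla\cdot)$ give $\tfrac{\partial w}{\partial\nu} < 0$, and since $u = fw$ with $w=0$, $f>0$ on $\Sigma$ we obtain $\tfrac{\partial u}{\partial\nu} = f\,\tfrac{\partial w}{\partial\nu} < 0$. As $u$ is constant ($=0$) on $\Sigma$, this forces $\nabla u = -\abs{\nabla u}\,\nu$ with $\abs{\nabla u}>0$, so $\Sigma$ is a regular level set and $\nu = -\nabla u/\abs{\nabla u}$.

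I expect the main obstacle to be precisely the interplay between the indefinite sign of $\Delta f/f$ and the degeneration of the natural weight $f^2$ at the horizon. The sign issue alone is handled once and for all by the substitution $u=fw$, but that substitution is singular exactly where $f=0$, so every step — the vanishing of the weighted boundary integrals in the uniqueness proof, the blow-up $w\to+\infty$ used for positivity, and the applicability of the minimum and Hopf principles near $N$ — must be justified by quantitative control coming from the horizon hypotheses, namely $\abs{\nabla f}>0$ and the continuous extension of $\nabla\nabla f/f$ on $N$. Carrying out this boundary analysis rigorously, rather than the elliptic theory in the interior which is routine, is where the real work lies.
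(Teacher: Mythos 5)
Your proof is correct, and its core coincides with the paper's own argument: the paper also passes to $w = u/f$, observes that $w$ solves exactly your equation --- equation \eqref{eq-furba} there is the non-divergence form of $\dive(f^2\nabla w) = -f$ (incidentally, \eqref{eq-furba} is misprinted: the drift term should read $\tfrac{2}{f}\langle\nabla w,\nabla f\rangle$) --- applies the minimum principle on the exhausting sets $\Omega_\delta = \Omega\setminus\{f<\delta\}$ using $w>0$ on $\{f=\delta\}\cap\Omega$ and $w=0$ on $\Sigma$, and then derives \eqref{hopf} from the Hopf lemma for $w$ via $\tfrac{\partial u}{\partial\nu} = f\tfrac{\partial w}{\partial\nu}$, exactly as you do. Where you genuinely depart from the paper is existence--uniqueness: the paper invokes \cite[Theorem 6.15]{Gil_Tru_book} together with the cited lemma of Li--Xia \cite{Xia-substatic} that the first Dirichlet eigenvalue of $\Delta - \Delta f/f$ is positive, whereas you obtain triviality of the kernel self-containedly from the weighted energy identity $\int_\Omega f^2\abs{\nabla z}^2\dd\mu = 0$ for $z = v/f$, and then apply the Fredholm alternative (which is precisely what the Gilbarg--Trudinger theorem provides). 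What your route buys is independence from the external eigenvalue lemma, at the price of the boundary analysis you correctly flag: one must know that $z$ is $C^1$ up to $N_j$ so that the weight $f^2$ annihilates the boundary term, which follows, as you say, from Hadamard-type division since $v\in C^{2,\alpha}(\overline\Omega)$ vanishes on the regular level set $N_j$ of $f$. Your preliminary observation is also a genuine improvement on the paper's presentation: continuity of $\nabla\nabla f/f$ up to $N$ forces $\nabla\nabla f=0$ on $N$, and smooth division along the regular level set then makes the coefficient $\Delta f/f$ smooth up to $N$; this is actually needed for Schauder theory up to the horizon portion of $\partial\Omega$, and the paper's hypothesis of mere continuous extension leaves it implicit. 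One caveat shared by both arguments: your step ``$z$ is constant and vanishes on $\Sigma$, hence $z\equiv 0$'' tacitly assumes every connected component of $\Omega$ meets $\Sigma$; a component bounded only by horizon pieces carries the kernel element $v=cf$ (so uniqueness genuinely fails there), and the paper's eigenvalue route excludes such components just as tacitly.
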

\begin{proof}
The existence of a unique classical solution $u$ to \eqref{torsion-pb} with the claimed regularity follows from \cite[Theorem 6.15]{Gil_Tru_book}, once checked that the operator $\Delta - \Delta f /f$ has strictly positive first eigenvalue. This is shown in \cite[Lemma 2.5]{Xia-substatic}.

\smallskip

Assume that $N= \{f = 0\}$ is a smooth hypersurface and regular level set of $f$ and $\partial\Omega = \Sigma \sqcup (\cup_{j \in J} N_j)$, and let us prove that $u> 0$ on $\Omega$, as well as \eqref{hopf}. The case  $J = \emptyset$ will follow too. Consider, for $\delta > 0$, the level set $N_\delta = \{f = \delta\}$. Since $f$ is smooth and $\abs{\nabla f} > 0$ on $N$, then for $\delta > 0$ small enough $N_\delta$ is a regular level set of $f$. Define, on $\Omega_\delta = \Omega \setminus \{f < \delta\}$, the function $w = {u}/{f}$. Then, it is directly checked that $w$ satisfies
\begin{equation}
\label{eq-furba}
\Delta w + 2 \langle \nabla w, \nabla f\rangle  =   - \frac{1}{f} < 0. 
\end{equation}
In particular, $w_\delta$ is subject to a strong minimum principle in $\Omega_\delta$. Since $u$ is continuous up to $N_j$, where it takes the positive value $c_j$, the function $w_\delta > 0$ on $N_\delta \cap \Omega$, and thus its minimum value on $\partial \Omega$ is zero. Thus, $u > 0$ on $\Omega_\delta$. Letting $\delta \to 0^+$, by the continuity up to the boundary of $u$ we deduce that $u > 0$ on $\Omega.$
Moreover, again because of \eqref{eq-furba} and by the Hopf Lemma for supersolutions to elliptic PDE's, we get 
\begin{equation}
\label{hopf-inproof}
0 > \frac{\partial w}{\partial \nu}(x) = \frac{1}{f} \left\langle \nabla u - \frac{1}{f} u \nabla f, \nu \right\rangle(x) =   \frac{1}{f} \frac{\partial u}{\partial \nu}(x),  
\end{equation}
for any $x \in \Sigma$, as $u = 0$ on such hypersurface. This concludes also the proof of the claimed Hopf-type property.
\end{proof}

\subsection{The vector field X and its nonnegative divergence}
On a Riemannian manifold $(M, g)$ consider for an open $U\subset M$, a \emph{positive} function $f \in C^\infty (U)$. Then, for a smooth function $u \in C^\infty(U)$ solving 
\begin{equation}
    \label{torsione-dasola}
\Delta u = - 1 + \frac{\Delta f}{f} u,
\end{equation}
we define the vector field $X$ by
\begin{equation}
\label{fieldX}
    X = f \nabla \abs{\nabla u}^2 + \frac{2}{n} f \nabla u - \nabla\nabla f \nabla u^2 -\frac{2}{n} u \nabla f - 2 u\nabla\nabla u \nabla f + 2 u^2 \frac{\nabla\nabla f}{f} \nabla f.
\end{equation}
Its divergence is computed in the following proposition.

\begin{proposition}
\label{divX-prop}
Let $(M, g)$ be a Riemannian manifold, and let $X$ be defined as above on some open set $U \subset M$ endowed with a positive smooth function $f$. Then, we have
\begin{equation}
\label{divX}    
    \dive X = 2 f \left\vert \nabla \nabla u - \frac{\Delta u}{n} g - u\left(\frac{\nabla \nabla f}{f} - \frac{\Delta f}{nf} g\right) \right\vert^2  + 2Q\left(\nabla u - \frac{u}{f} \nabla f, \nabla u - \frac{u}{f} \nabla f\right),
\end{equation}
where the tensor $Q$ is defined as
\begin{equation}
\label{qinprop}
Q = f\ric - \nabla \nabla f + \Delta f g.     
    \end{equation}
\end{proposition}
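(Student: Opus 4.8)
The plan is to compute $\dive X$ by the Leibniz rule applied to each of the six summands of $X$, reading every product of the form $\nabla\nabla f\,\nabla u$ as the Hessian, viewed as an endomorphism, applied to the indicated gradient, and using $\nabla\abs{\nabla u}^2 = 2\,\nabla\nabla u(\nabla u)$ together with $\nabla u^2 = \nabla(u^2) = 2u\,\nabla u$. With this reading the six summands are $2f\,\nabla\nabla u(\nabla u)$, $\tfrac{2}{n}f\,\nabla u$, $-2u\,\nabla\nabla f(\nabla u)$, $-\tfrac{2}{n}u\,\nabla f$, $-2u\,\nabla\nabla u(\nabla f)$ and $\tfrac{2u^2}{f}\,\nabla\nabla f(\nabla f)$, displaying the symmetry under exchanging $f$ and $u$ that ultimately yields the squares in \eqref{divX}.

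The computation rests on two identities. For any smooth $\psi$ the contracted commutation (Ricci) identity gives
\[
\dive\!\big(\nabla\nabla\psi\big) = \nabla \Delta \psi + \ric(\nabla \psi, \cdot),
\]
and hence the Bochner identity
\[
\dive\!\big(\nabla\nabla u(\nabla u)\big) = \abs{\nabla\nabla u}^2 + \langle \nabla u, \nabla \Delta u\rangle + \ric(\nabla u, \nabla u).
\]
Applying these to the six summands, the Ricci tensor is produced exactly four times, from the first, third, fifth and sixth summands, with coefficients $2f$, $-2u$, $-2u$, $\tfrac{2u^2}{f}$ on the arguments $(\nabla u,\nabla u)$, $(\nabla f,\nabla u)$, $(\nabla u,\nabla f)$, $(\nabla f,\nabla f)$. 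Setting $Y = \nabla u - \tfrac{u}{f}\nabla f$, these combine to exactly $2f\,\ric(Y,Y)$, the Ricci part of $2Q(Y,Y)$.

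The third-order terms are handled by differentiating \eqref{torsione-dasola}, which gives
\[
\nabla \Delta u = \frac{u}{f}\,\nabla \Delta f - \frac{u\,\Delta f}{f^2}\,\nabla f + \frac{\Delta f}{f}\,\nabla u ,
\]
and substituting this wherever $\nabla\Delta u$ occurs. After substitution the genuine third-order contributions, all proportional to $\nabla \Delta f$, cancel in pairs, as do the two terms proportional to $\nabla\nabla u(\nabla u,\nabla f)$ and the two first-order terms $\pm\tfrac{2}{n}\langle\nabla f,\nabla u\rangle$. What survives is: the purely Hessian quadratic $2f\abs{\nabla\nabla u}^2 - 4u\,\langle\nabla\nabla u,\nabla\nabla f\rangle + \tfrac{2u^2}{f}\abs{\nabla\nabla f}^2$, which is $2f\,\big|\nabla\nabla u - \tfrac{u}{f}\nabla\nabla f\big|^2$; the Ricci block above; the terms $2\Delta f\,\abs{Y}^2 - 2\,\nabla\nabla f(Y,Y)$, which complete $2Q(Y,Y)$; and a batch of trace terms $\tfrac{2}{n}\big(f\Delta u - u\Delta f\big)$.

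The final step is algebraic and is where \eqref{torsione-dasola} enters decisively. Since $\Delta u - \tfrac{u}{f}\Delta f = -1$, the trace batch equals $\tfrac{2f}{n}\big(\Delta u - \tfrac{u}{f}\Delta f\big) = -\tfrac{2f}{n}$, which is precisely the constant that turns the full Hessian square into its trace-free counterpart, namely $2f\,\big|\nabla\nabla u - \tfrac{u}{f}\nabla\nabla f\big|^2 - \tfrac{2f}{n} = 2f\,\big|\nabla\nabla u - \tfrac{\Delta u}{n}g - u(\tfrac{\nabla\nabla f}{f} - \tfrac{\Delta f}{nf}g)\big|^2$. Collecting the four surviving blocks then yields \eqref{divX}. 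The main difficulty is purely organizational, namely controlling the many third-order and mixed-Hessian terms; the conceptual point is that both the curvature and the second-order contributions organize around the single vector field $Y = \nabla u - \tfrac{u}{f}\nabla f$, and that the torsion equation forces the trace remainder to be exactly the constant needed to render the Hessian term trace-free.
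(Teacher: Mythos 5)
Your proposal is correct and follows essentially the same route as the paper: a term-by-term Leibniz computation of the six summands, using the Bochner and Ricci commutation identities together with substitution of the equation $\Delta u = -1 + u\Delta f/f$, and a final trace manipulation to produce the trace-free Hessian square. The only difference is bookkeeping — you collect blocks around $Y = \nabla u - \tfrac{u}{f}\nabla f$ and defer the trace-free completion to the end, whereas the paper rewrites $\abs{\nabla\nabla u}^2$ via the trace-free square early on — but the substance of the computation is identical, and your claimed cancellations (the $\nabla\Delta f$ pairs, the mixed $\nabla\nabla u(\nabla u,\nabla f)$ pair, and the $\pm\tfrac{2}{n}\langle\nabla f,\nabla u\rangle$ pair) all check out.
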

\begin{proof}
Let us compute separately the divergence of the six summands forming $X$. By applying the Bochner identity, and using the relation \eqref{torsione-dasola} we have 
\begin{equation}
  \label{div1}
  \begin{split}
    \dive (f \nabla \abs{\nabla u}^2) &= \langle \nabla f, \nabla \abs{\nabla u}^2\rangle + f \Delta \abs{\nabla u}^2 \\ 
    &\!\!\!\!\!\!\!\!\!\!\!\!\!\!\!\!\!= \langle \nabla f, \nabla \abs{\nabla u}^2\rangle + 2f \left[\abs{\nabla \nabla u}^2 + \langle \nabla (\Delta u), \nabla u \rangle + \ric(\nabla u,\nabla u)\right] \\
    &\!\!\!\!\!\!\!\!\!\!\!\!\!\!\!\!\!=  \langle \nabla f, \nabla \abs{\nabla u}^2\rangle + 2f \left[\abs{\nabla \nabla u}^2 + \frac{1}{f}\langle \nabla \Delta f, \nabla u \rangle - \frac{1}{f^2} \langle \Delta f, \nabla u \rangle + \frac{\Delta f}{f} \abs{\nabla u}^2 + \ric(\nabla u,\nabla u) \right]
\end{split}
\end{equation}
Observe now that, again since $u$ solves \eqref{torsione-dasola}, we have
\begin{equation}
    \label{quadratone-modificato}
    \begin{split}
    \left\vert \nabla \nabla u - \frac{\Delta u}{n} g - u\left(\frac{\nabla \nabla f}{f} - \frac{\Delta f}{nf} g\right) \right\vert^2 &= \left\vert \nabla \nabla u - u\frac{\nabla \nabla f}{f} + \frac{1}{n} g \right\vert^2 = \\ 
    &\!\!\!\!\!\!\!\!\!\!\!\!\!\!\!\!\!\!\!\!\!\!\!\!\!\!\!\!\!\!\!\!\!\!\!= \left\vert \nabla \nabla u \right\vert^2 + u^2 \frac{\left\vert\nabla \nabla f\right\vert^2}{f^2} + \frac{1}{n} - 2 u \left\langle\frac{\nabla \nabla f}{f}, \nabla \nabla u \right\rangle + \frac{2}{n} \Delta u - \frac{2}{n}\frac{\Delta f}{f} u \\
    &\!\!\!\!\!\!\!\!\!\!\!\!\!\!\!\!\!\!\!\!\!\!\!\!\!\!\!\!\!\!\!\!\!\!\!\!= \left\vert \nabla \nabla u \right\vert^2 + u^2 \frac{\left\vert\nabla \nabla f\right\vert^2}{f^2} - 2 u \left\langle\frac{\nabla \nabla f}{f}, \nabla \nabla u \right\rangle + \frac{1}{n} \Delta u - \frac{1}{n}\frac{\Delta f}{f} u.
\end{split}
\end{equation}
Thus, rewriting $\abs{\nabla \nabla u}^2$ through \eqref{quadratone-modificato}, and plugging it into \eqref{div1}, we get
\begin{equation}
    \label{div1-bis}
    \begin{split}
     \dive (f \nabla \abs{\nabla u}^2) = \langle \nabla f, \nabla \abs{\nabla u}^2\rangle &+ 2  \left\vert \nabla \nabla u - \frac{\Delta u}{n} g - u\left(\frac{\nabla \nabla f}{f} - \frac{\Delta f}{nf} g\right) \right\vert^2 -2 u^2 \frac{\vert\nabla \nabla f\vert^2}{f} \\
     &+4u \left\langle\nabla\nabla f, \nabla \nabla u \right\rangle - \frac{2}{n}f\Delta u + \frac{2}{n} u\Delta f  + 2\langle \nabla \Delta f, \nabla u \rangle \\
     &-2\frac{\Delta f}{f} \langle \nabla f, \nabla u \rangle + 2 {\Delta f} \abs{\nabla u}^2 + 2f \ric (\nabla u, \nabla u),
\end{split}
\end{equation}
that is the formula for the divergence of the first term in \eqref{fieldX} we are going to employ in this proof.
The divergence of $f \nabla u$ is just written as
\begin{equation}
    \label{div2}
    \dive{f \nabla u} = \langle \nabla f, \nabla u\rangle + f \Delta u,
\end{equation}
with no need for employing the equation solved by $u$. Let us recall now the classical identity for the Ricci tensor given by
\begin{equation}
    \label{ricci-id}
    \Delta \nabla v = \nabla \Delta v + \ric \nabla v
\end{equation}
in force for any $C^{3}$-function.
Applying it to the smooth function $f$, we get, in taking the divergence of the third term,
\begin{equation}
    \label{div3}
    \begin{split}
    \dive(\nabla \nabla f \nabla u^2) &= 2 u\langle\Delta \nabla f, \nabla u\rangle + 2 u \langle \nabla\nabla f, \nabla \nabla u\rangle + 2  \nabla \nabla f(\nabla u, \nabla u) \\ 
&=2 u\langle\Delta \nabla f, \nabla u\rangle + 2u \ric(\nabla f, \nabla u) + 2 u \langle \nabla\nabla f, \nabla \nabla u\rangle + 2  \nabla \nabla f(\nabla u, \nabla u).  
\end{split}
\end{equation}
The fourth divergence is  kept as
\begin{equation}
    \label{div4}
    \dive (u \nabla f) = \langle \nabla u, \nabla f\rangle + u \Delta f.
\end{equation}
In computing the divergence of the fifth term, we apply the identity \eqref{ricci-id} to $u$, and combine it with \eqref{torsione-dasola}, to get
\begin{equation}
    \label{div5}
    \begin{split}
    \dive (u \nabla \nabla u \nabla f) &= u \langle \Delta \nabla u, \nabla f\rangle + u \langle \nabla \nabla u, \nabla \nabla f\rangle + \nabla \nabla u (\nabla u, \nabla f) \\ 
    &\!\!\!\!\!\!= \frac{u}{f}\langle \nabla \Delta f, \nabla f\rangle - \frac{u}{f^2} \Delta f \abs{\nabla f}^2 + u\ric (\nabla u, \nabla f)  + u \langle \nabla \nabla u, \nabla \nabla f\rangle +  \frac{1}{2} \langle \nabla \abs{\nabla u}^2, \nabla f \rangle.
\end{split}
\end{equation}
We compute the last term, again using \eqref{ricci-id} in relation with $f$, as
\begin{equation}
    \label{div6}
    \begin{split}
    \dive (u^2 \frac{\nabla \nabla f}{f} \nabla f) = \frac{u^2}{f} \langle \nabla \Delta f, \nabla f\rangle + \frac{u^2}{f} \ric(\nabla f, \nabla f) &+ 2\frac{u}{f} \nabla \nabla f(\nabla u, \nabla f) \\
    &- \frac{u^2}{f^2} \nabla \nabla f(\nabla f, \nabla f) + \frac{u^2}{f}\abs{\nabla \nabla f}^2.
\end{split}
\end{equation}
The identity \eqref{divX} now follows immediately by putting together \eqref{div1-bis}, \eqref{div2}, \eqref{div3}, \eqref{div4}, \eqref{div5} and \eqref{div6}. 
\end{proof}
In the following Remark, we clarify the link between the nonnegative divergence of $X$ and the sub-harmonicity of the P-function \cite{qiu-xia-pfunz,Ciraolo-Vezzoni-Serrin} in space forms.
\begin{remark}[Relation between $X$ and the P-function of space forms.]
\label{XeP}
Let $\mathbb{M}^n_K$ be the hyperbolic space $\Heis^n_K$ or the hemisphere $\Sf^n_K$ of negative or positive constant sectional curvature $K$, respectively, and of dimension $n$. Let $\mathbb{M}^n_0$ be flat $\R^n$. These manifolds can be realized as warped products $(I_K \times \Sf^{n-1}, d r \otimes \! d r + h_K^2(r)g_{\Sf^{n-1}})$, where 
\begin{equation}
    I_K = 
    \begin{cases}
    [0, \infty) \qquad \text{if $K \leq 0$}\\
    \Big[0, \frac{\pi}{2\sqrt{K}}\Big) \qquad \text{if $K > 0$}
    \end{cases}
    \end{equation}
and
\begin{equation}
h_K=
\begin{cases}
r \qquad \text{if $K= 0$} \\
\frac{1}{\sqrt{\abs{K}}} \sinh(r\sqrt{\abs{K}}) \quad \text{if $K < 0$}\\
\frac{1}{\sqrt{K}} \sin(r \sqrt{K}) \quad \text{if $K > 0$}.
\end{cases}
\end{equation}
Letting $f = \dot h (r)$, these manifolds are immediately seen to satisfy $\nabla \nabla f = n^{-1}\Delta f g = -K f g$, to have constant Ricci curvature $\ric = (n-1)K$, and thus in particular to be static, that is $Q$ vanishes. Plugging this information into \eqref{divX}, a routine calculation yields 
\begin{equation}
\label{divX-spaceform}    
\dive (X) = f \dive \big(\nabla \abs{\nabla u}^2 + \frac{2}{n} \nabla u + K \nabla u^2\big) = 2f \left\vert \nabla \nabla u - \frac{\Delta u}{n} g\right\vert^2.
\end{equation}
On the other hand, we have 
\[
\nabla \abs{\nabla u}^2 + \frac{2}{n} \nabla u + K \nabla u^2 = \nabla P_K,
\]
where $P_K$ is the P-function for space forms
\begin{equation}
P_K = \abs{\nabla u}^2 + \frac{2}{n}u + Ku^2,
\end{equation}
and thus we deduced
\[
\Delta P_K = 2 \left\vert \nabla \nabla u - \frac{\Delta u}{n} g\right\vert^2,
\]
that is the sub-harmonicity stated in \cite[Lemma 2.1]{Ciraolo-Vezzoni-Serrin}.
\end{remark}

\section{Integral identities and geometric consequences}
\label{sec3}
We specialize now $X$ on a substatic Riemannian manifold with empty boundary or horizon boundary $N = \{f = 0\}$. 
The following straightforward application of the Divergence Theorem to $X$ in the subset $\Omega$ will naturally lead to the main results, Theorems \ref{alexandrov-th} and \ref{heintze-karcher}. Although the computation of the divergence of $X$ in Proposition \ref{divX-prop} is obviously carried out where $f \neq 0$, the identities that follow make sense continuously up to $N = \{f = 0\}$ since $f$ appears at the denominator only as $\nabla \nabla f / f$ and $\Delta f / f$, that by our Definition \ref{horizon} of horizon boundary  extend continuously at points where $f= 0$.
\begin{proposition}[Main integral identity]
\label{integral-id}
Let $(M, g)$ be a Riemannian manifold endowed with a nonnegative function $f \in C^{\infty}(M)$ possibly with horizon boundary $N = N_1 \sqcup \dots \sqcup N_l$. Let $\Omega \subset M$ be a bounded subset of $M$, with $C^{2, \alpha}$-boundary $\partial \Omega = \Sigma \sqcup \cup_{j \in J} N_j$ for $J \subseteq \{1, \dots, l\}$, with $\Sigma \cap N = \emptyset$, and let $u$ be a solution to \eqref{torsion-pb} with $c_j > 0$ for any $j \in J$. Then, we have
\begin{equation}
    \label{integral-f}
    \begin{split}
    \int_{\Omega} \dive X  \dd\mu \, + & \, 2\sum_{j \in J}\left[\frac{(n-1)}{n} \int_{N_j} u \abs{\nabla f} \dd\sigma - \int_{N_j} u^2 \abs{\nabla f} \left(\frac{\Delta f}{f} - \frac{\nabla \nabla f}{f} \left(\frac{\nabla f}{\abs{\nabla f}}, \frac{\nabla f}{\abs{\nabla f}}\right)\right) \dd\sigma \right]= \\ 
    &=-2 \int_{\Sigma} f \abs{\nabla u}^2 \HH \dd\sigma + 2\frac{(n-1)}{n} \int_{\Sigma} f \abs{\nabla u} \dd\sigma , 
\end{split}
\end{equation}
where $\HH$ is the mean curvature of $\Sigma$.
\end{proposition}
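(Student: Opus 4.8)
The plan is to obtain \eqref{integral-f} as a direct consequence of the Divergence Theorem applied to the field $X$ of \eqref{fieldX} on $\Omega$, after identifying the two boundary contributions coming from $\Sigma$ and from the horizon components $N_j$. Since the only place where $f$ sits at the denominator of $X$ is the summand $2u^2(\nabla\nabla f/f)\nabla f$, and $\nabla\nabla f/f$ extends continuously up to $N$ by Definition \ref{horizon}, the field $X$ itself extends continuously to $\overline\Omega$. I would therefore integrate $\dive X$, as computed in \eqref{divX}, over the exhaustion $\Omega_\delta=\Omega\setminus\{f<\delta\}$, whose boundary is $\Sigma\sqcup N_\delta$ with $N_\delta=\{f=\delta\}$, and let $\delta\to 0^+$. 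On $\Sigma$ the outer normal is $\nu$, while on $N_\delta$ it is $-\nabla f/\abs{\nabla f}$; using the continuous extensions of $\Delta f/f$ and $\nabla\nabla f/f$ the $N_\delta$-integral converges to a sum over the $N_j$. This reduces the proof to evaluating $\langle X,\nu\rangle$ on each boundary piece, so that $\int_\Omega\dive X\,\dd\mu=\int_\Sigma\langle X,\nu\rangle\,\dd\sigma+\sum_{j\in J}\int_{N_j}\langle X,\nu\rangle\,\dd\sigma$.

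On $\Sigma$ I would invoke the Hopf property of Theorem \ref{existence}: $u=0$ and $\nu=-\nabla u/\abs{\nabla u}$. Every summand of $X$ carrying a factor $u$, $u^2$ or $\nabla u^2=2u\nabla u$ then vanishes, leaving $\langle X,\nu\rangle=f\langle\nabla\abs{\nabla u}^2,\nu\rangle+\tfrac2n f\langle\nabla u,\nu\rangle$. Writing $\nabla\abs{\nabla u}^2=2\nabla\nabla u(\nabla u,\cdot)$ and using $\nabla u=-\abs{\nabla u}\nu$ gives $\langle\nabla\abs{\nabla u}^2,\nu\rangle=-2\abs{\nabla u}\,\nabla\nabla u(\nu,\nu)$. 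The standard splitting of the Laplacian along the regular level set $\Sigma$, together with the minimal/tangential part vanishing because $u$ is constant on $\Sigma$, yields $\nabla\nabla u(\nu,\nu)=\abs{\nabla u}\HH+\Delta u$; since $u=0$ forces $\Delta u=-1$ on $\Sigma$ through \eqref{torsion-pb}, this is $\abs{\nabla u}\HH-1$. Substituting and simplifying produces exactly $\langle X,\nu\rangle=-2f\abs{\nabla u}^2\HH+2\tfrac{n-1}{n}f\abs{\nabla u}$ on $\Sigma$, which is the right-hand side of \eqref{integral-f}.

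The core of the argument is the flux across each horizon component $N_j$, where $f=0$, $u=c_j$ and $\nu=-\nabla f/\abs{\nabla f}$. Here I would use three facts. First, the two summands carrying an explicit factor $f$ vanish. Second, because $\nabla\nabla f/f$ has a finite limit while $f\to 0$, the full Hessian $\nabla\nabla f$ vanishes on $N_j$; this is precisely what annihilates the contribution of $-(\nabla\nabla f)\nabla u^2$, which a priori would carry the unwanted normal derivative $\langle\nabla u,\nabla f/\abs{\nabla f}\rangle$. Third, since $N_j$ is minimal and $u\equiv c_j$ on it, the level-set splitting of the Laplacian gives in the limit $\nabla\nabla u(\nabla f/\abs{\nabla f},\nabla f/\abs{\nabla f})=\Delta u=-1+c_j\,\Delta f/f$. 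The surviving summands then combine, through the continuous extensions of $\Delta f/f$ and $\nabla\nabla f/f$, into $\langle X,\nu\rangle=-2\tfrac{n-1}{n}u\abs{\nabla f}+2u^2\abs{\nabla f}\big(\Delta f/f-(\nabla\nabla f/f)(\nabla f/\abs{\nabla f},\nabla f/\abs{\nabla f})\big)$. Integrating over $N_j$, summing over $j$ and moving these terms to the left-hand side reproduces the bracketed horizon integrals of \eqref{integral-f}, so that assembling the three computations yields the identity.

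I expect the main obstacle to be exactly this horizon computation. One must treat the indeterminate forms $\Delta f/f$ and $\nabla\nabla f/f$ only through their continuous extensions, and the decisive cancellation relies on simultaneously using the minimality of $N_j$ (to discard the tangential Hessian of $u$) and the vanishing of $\nabla\nabla f$ on $N_j$ (to discard the stray normal-derivative term). A secondary technical point is the justification of the Divergence Theorem up to the horizon, which I would settle through the exhaustion $\Omega_\delta$ and the continuity of the extended integrands rather than by applying it directly on $\Omega$.
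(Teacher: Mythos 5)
Your proposal is correct and follows essentially the same route as the paper: apply the Divergence Theorem to $X$ on $\Omega$, evaluate the flux through $\Sigma$ via the level-set splitting of the Laplacian together with $u=0$ and $\Delta u=-1$ there, and evaluate the flux through each $N_j$ using the minimality of $N_j$, the constancy of $u$, and the continuous extensions of $\nabla\nabla f/f$ and $\Delta f/f$. Your two refinements --- justifying the Divergence Theorem up to the horizon by the exhaustion $\Omega_\delta$, and explicitly noting that $\nabla\nabla f$ vanishes on $N_j$ so that the term $-(\nabla\nabla f)\nabla u^2$ drops --- are points the paper handles implicitly, and your computations match its equations \eqref{mean-curv-u}, \eqref{mean-curv-f} and \eqref{integral-f} exactly.
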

\begin{proof}
The Divergence Theorem applied to the vector field $X$ on $\Omega$ states that
\begin{equation}
    \label{div-applied-stupida}
    \int_\Omega \dive X \dd\mu = - \int_\Sigma \left\langle X, \frac{\nabla u}{\abs{\nabla u}} \right\rangle \dd\sigma -\sum_{j \in J} \int_{N_j} \left\langle X, \frac{\nabla f}{\abs{\nabla f}}\right\rangle \dd\sigma.
\end{equation}
Indeed, the vector field $\nabla u/ \abs{\nabla u}$ is normal to $\Sigma$, as it is a regular level set of $u$ by Theorem \ref{existence}, and points inside $\Omega$, and the same goes for ${\nabla f}/{\abs{\nabla f}}$ on $N_j$ by assumption.
We thus compute the two boundary integrals, starting from that on $\Sigma$. We have
\begin{equation}
\label{integralbordo1}
\begin{split}
\int_\Sigma \left\langle X, \frac{\nabla u}{\abs{\nabla u}} \right\rangle \dd\sigma &=  \int_\Sigma f \left \langle\nabla \abs{\nabla u}^2, \frac{\nabla u}{\abs{\nabla u}} \right\rangle \dd\sigma + \frac{2}{n} \int_\Sigma f \abs{\nabla u} \dd\sigma \\ 
&=   2\int_\Sigma f \abs{\nabla u} \left(\frac{\nabla u}{\abs{\nabla u}}, \frac{\nabla u}{\abs{\nabla u}}\right) \dd\sigma + \frac{2}{n} \int_\Sigma f   \abs{\nabla u} \dd\sigma.
\end{split}
\end{equation}
Recall now the classical relation between the Laplacian $\Delta_S$ induced on a hypersurface $S$ and that of the ambient metric
\begin{equation}
    \label{relation-laplace}
    \Delta_S \, v = \Delta v - \nabla \nabla v (\nu, \nu) - \HH \, \langle \nabla u, \nu \rangle \end{equation}
    for a $C^2$-function $v$, where $\nu$ is a unit normal to $S$, and $\HH$ the relative mean curvature. Applying it to $u$ on $\Sigma$, using that $u = 0$ on $\Sigma$ and that it solves $\Delta u = - 1 + u\Delta f/ f $, we deduce 
    \begin{equation}
    \label{mean-curv-u}    
        \nabla \nabla u \left(\frac{\nabla u}{\abs{\nabla u}}, \frac{\nabla u}{\abs{\nabla u}} \right) = - 1 + \HH \abs{\nabla u}
    \end{equation}
that, plugged into \eqref{integralbordo1}, yields
\begin{equation}
    \label{integralbodo1finale}
    \int_\Sigma \left\langle X,  \frac{\nabla u}{\abs{\nabla u}}\right\rangle \dd\sigma = 2 \int_\Sigma f \abs{\nabla u}^2 \, \HH \dd\sigma - 2 \,\frac{(n-1)}{n} \int_\Sigma f \abs{\nabla u} \dd\sigma.
\end{equation}
We now turn our attention to the integrals on $N_j$ for $j \in J$, in the right hand side of \eqref{div-applied-stupida}. Using that $N_j \subset \{f = 0\}$ is regular for $f$, and thus its normal $\nu_j$ pointing inside $\Omega$ is given by $\nu_j(x) = \nabla f /\abs{\nabla f}(x)$ for any $x \in N_j$, we get
\begin{equation}
    \label{integralbordo2}
    \begin{split}
 \int_{N_j} \left\langle X, \frac{\nabla f}{\abs{\nabla f}}\right\rangle \dd\sigma = - \frac{2}{n} \int_{N_j}  u \abs{\nabla f}  \dd\sigma &- 2\int_{N_j} u \abs{\nabla f} \, \nabla\nabla u \left(\frac{\nabla f}{\abs{\nabla f}} , \frac{\nabla f}{\abs{\nabla f}}\right)  \dd\sigma  \\
 &+ 2 \int_{N_j} u^2 \abs{\nabla f} \frac{\nabla \nabla f}{f}\left(\frac{\nabla f}{\abs{\nabla f}}, \frac{\nabla f}{\abs{\nabla f}} \right) \dd\sigma. 
\end{split}
\end{equation}
Using again \eqref{relation-laplace}, this time applied to the function $u$ on $N_j$, where $u$ is still constant, we deduce
\begin{equation}
    \label{mean-curv-f}
    \nabla\nabla u \left(\frac{\nabla f}{\abs{\nabla f}} , \frac{\nabla f}{\abs{\nabla f}}\right) = - 1 +  \frac{\Delta f}{f} u,
    \end{equation}
    where we exploited the minimality of $N_j$. Plugging it into \eqref{integralbordo2}, we get
    \begin{equation}
        \label{integralbordo2finale}
      \int_{N_j} \left\langle X, \nu_j\right\rangle \dd\sigma = 2 \frac{(n-1)}{n} \int_{N_j} u \abs{\nabla f} \dd\sigma - \int_{N_j} u^2 \abs{\nabla f} \left(\frac{\Delta f}{f} - \frac{\nabla \nabla f}{f} \left(\frac{\nabla f}{\abs{\nabla f}}, \frac{\nabla f}{\abs{\nabla f}}\right)\right) \dd\sigma,    
    \end{equation}
    that, combined with \eqref{integralbordo1} into \eqref{div-applied-stupida}, provides \eqref{integral-f}.
\end{proof}
The above result should be compared with \cite[Theorem 2.1]{magnanini-poggesi1}, that yields \eqref{integral-f} in the very special geometry of $\R^n$.

As a first application of Proposition \ref{integral-id}, we observe how, in the substatic case, in the mere presence of a \emph{strictly mean-convex} hypersurface $\Sigma$ homologous to $\cup_{j \in J} N_jFs$, a purely geometric integral quantity computed on the horizon $N$ is positive. This is what will enable us to choose the sharpest constants $c_j$ in \eqref{torsion-pb}. With $S$ being homologous to $\cup_{j \in J} N_j$, we mean that there exists a a bounded open set $E$ with $\partial E = S \sqcup (\cup_{j \in J} N_j)$.
\begin{proposition}
\label{boundary-positive}
Let $(M, g)$ be a substatic Riemannian manifold with substatic potential $f$ and nonempty horizon boundary $N= N_1 \sqcup \dots \sqcup N_l$. Assume there exists a $C^{2, \alpha}$ strictly mean-convex hypersurface $S$ homologous to $\cup_{j \in J} N_j$. Then
\begin{equation}
\label{boundary-positive-integral}    
\int_{N_j} \abs{\nabla f}\left[\frac{\Delta f}{f} - \frac{\nabla \nabla f}{f} \left(\frac{\nabla f}{\abs{\nabla f}}, \frac{\nabla f}{\abs{\nabla f}}\right)\right] \dd\sigma > 0
\end{equation}
for any $j \in J$.
\end{proposition}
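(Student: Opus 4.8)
The plan is to argue by contradiction, exploiting the freedom in the constants $c_j$ inside the integral identity \eqref{integral-f} together with the nonnegativity of $\dive X$ guaranteed by the substatic condition. Fix $j_0 \in J$ and, to lighten the notation, abbreviate
\[
A_{j}=\int_{N_{j}}\abs{\nabla f}\dd\sigma, \qquad B_{j}=\int_{N_{j}}\abs{\nabla f}\left[\frac{\Delta f}{f}-\frac{\nabla\nabla f}{f}\left(\frac{\nabla f}{\abs{\nabla f}},\frac{\nabla f}{\abs{\nabla f}}\right)\right]\dd\sigma,
\]
so that the assertion \eqref{boundary-positive-integral} reads $B_{j_0}>0$. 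Observe that $A_{j_0}>0$, since $\abs{\nabla f}>0$ on the regular level set $N_{j_0}$. Suppose, for contradiction, that $B_{j_0}\le 0$. Let $E$ be the bounded open set provided by the homology assumption, with $\partial E=S\sqcup(\cup_{j\in J}N_j)$, and apply the construction of Proposition \ref{integral-id} on $\Omega=E$ with $\Sigma=S$. For a parameter $c>0$ let $u_c$ solve \eqref{torsion-pb} with $c_{j_0}=c$ and with the remaining constants $c_j$ ($j\neq j_0$) frozen at some positive values; by Theorem \ref{existence} the function $u_c$ is the genuine positive solution and $\{u_c=0\}=S$ is regular. Since $(M,g)$ is substatic we have $Q\ge 0$, hence $\dive X\ge 0$ by Proposition \ref{divX-prop}. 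Discarding the nonnegative bulk term $\int_\Omega \dive X\dd\mu$ from \eqref{integral-f} and using $u_c=c$ on $N_{j_0}$ and $u_c=c_j$ on $N_j$, we obtain
\[
2\sum_{j\in J}\Big(\frac{n-1}{n}\,c_j A_j-c_j^2 B_j\Big)\,\le\, -2\int_{S}f\abs{\nabla u_c}^2\HH\dd\sigma+2\,\frac{n-1}{n}\int_{S}f\abs{\nabla u_c}\dd\sigma.
\]

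I would then read off the asymptotics of the two sides as $c\to+\infty$, the other constants being fixed. On the left, every summand with $j\neq j_0$ stays bounded, while the $j_0$-th summand equals $2\big(\tfrac{n-1}{n}cA_{j_0}-c^2 B_{j_0}\big)$; since $A_{j_0}>0$ and $-B_{j_0}\ge 0$ by assumption, the left-hand side diverges to $+\infty$. For the right-hand side I use linearity of \eqref{torsion-pb} to write $u_c=v+c\,u_{j_0}$, where $v$ solves the problem with $c_{j_0}=0$ and $u_{j_0}>0$ solves the homogeneous equation $\Delta u_{j_0}=(\Delta f/f)u_{j_0}$ with boundary datum $1$ on $N_{j_0}$ and $0$ on the other components (positivity and the Hopf property following exactly as in Theorem \ref{existence}, since $f>0$ near $S$). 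Both $v$ and $u_{j_0}$ vanish on $S$, so their gradients there are normal and $\abs{\nabla u_c}^2=\big(\tfrac{\partial v}{\partial\nu}+c\,\tfrac{\partial u_{j_0}}{\partial\nu}\big)^2$ on $S$. Because $S=\Sigma$ is compact and disjoint from $N=\{f=0\}$ we have $f>0$ on $S$, strict mean-convexity gives $\HH>0$ on $S$, and the Hopf property yields $\partial u_{j_0}/\partial\nu<0$ everywhere on $S$. Hence $\int_{S}f\abs{\nabla u_c}^2\HH\dd\sigma=\kappa\,c^2+O(c)$ with $\kappa=\int_{S}f\,(\partial u_{j_0}/\partial\nu)^2\HH\dd\sigma>0$, while $\int_{S}f\abs{\nabla u_c}\dd\sigma=O(c)$, so the right-hand side behaves like $-2\kappa\,c^2\to-\infty$.

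For $c$ large the displayed inequality then forces a quantity diverging to $+\infty$ to be bounded above by one diverging to $-\infty$, which is absurd. Therefore $B_{j_0}>0$, and since $j_0\in J$ was arbitrary this is precisely \eqref{boundary-positive-integral}. The one genuinely delicate point, on which the whole argument hinges, is establishing that the boundary term on $S$ grows \emph{quadratically} in $c$ with a \emph{strictly positive} coefficient $\kappa$: this is exactly where the strict mean-convexity of $S$ (giving $\HH>0$) and the Hopf-type positivity of the homogeneous limit $u_{j_0}$ (giving $\abs{\nabla u_{j_0}}>0$ on $S$) are both indispensable. Everything else is a routine matching of powers of $c$.
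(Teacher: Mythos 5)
Your overall strategy coincides with the paper's: argue by contradiction, apply the identity \eqref{integral-f} of Proposition \ref{integral-id} on the set $E$ bounded by $S$ and the horizon components, discard the bulk term thanks to $\dive X \geq 0$ (substaticity via Proposition \ref{divX-prop}), and let the constant on the offending component diverge. Where you part ways is the endgame. The paper treats the solutions $u_k$ as black boxes and runs a dichotomy: if $\int_S \abs{\nabla u_k}\dd\sigma$ stays bounded, the right-hand side stays bounded while the left diverges; if it diverges, then by H\"older $\int_S\abs{\nabla u_k}^2\dd\sigma$ diverges too, and the right-hand side, which is dominated by an expression of the form $K_1 t - K_2 t^2$ with $t = \left(\int_S\abs{\nabla u_k}^2\dd\sigma\right)^{1/2}\to\infty$, tends to $-\infty$. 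You instead exploit linearity of \eqref{torsion-pb} to write $u_c = v + c\,u_{j_0}$ and extract the exact asymptotics $-2\kappa c^2 + O(c)$, $\kappa>0$. Your route is more quantitative (it pins the rate in $c$), but it buys this at the price of an auxiliary PDE ingredient --- positivity and the Hopf property for the homogeneous solution $u_{j_0}$ --- which the paper's softer argument never needs.

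That ingredient is where your proof, as written, has a gap. You assert that positivity and the Hopf property of $u_{j_0}$ (datum $1$ on $N_{j_0}$, $0$ on the other components) follow ``exactly as in Theorem \ref{existence}''. The argument there sets $w = u/f$ and applies the minimum principle on $\Omega_\delta$, and it closes precisely because the boundary data are \emph{strictly positive on every} horizon component: this forces $w>0$ on all of $N_\delta$, so the boundary minimum of $w$ is $0$ and is attained on $\Sigma$. For your $u_{j_0}$, when $J$ has more than one element, near the components $N_j$ with $j\neq j_0$ both $u_{j_0}\to 0$ and $f\to 0$, so the sign of $w=u_{j_0}/f$ on those parts of $N_\delta$ is a priori indeterminate and the minimum-principle argument does not go through verbatim (it does when $J=\{j_0\}$). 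The gap is fixable with standard tools: since $\Delta - \Delta f/f$ has positive principal Dirichlet eigenvalue (the fact from \cite{Xia-substatic} already invoked for existence in Theorem \ref{existence}), the associated generalized maximum principle gives $u_{j_0}\geq 0$ in $E$; the Harnack inequality (or the strong maximum principle for nonnegative solutions) upgrades this to $u_{j_0}>0$ in the interior, and then Hopf's lemma at $S$ --- valid there because the zeroth-order coefficient is bounded near $S$ and $u_{j_0}$ vanishes on $S$ --- yields $\partial u_{j_0}/\partial\nu <0$. Alternatively, you only need $\kappa>0$, i.e.\ that $\partial u_{j_0}/\partial\nu$ is not identically zero on $S$, and this follows from unique continuation: vanishing Cauchy data on $S$ would force $u_{j_0}\equiv 0$, contradicting the datum on $N_{j_0}$. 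With either patch your argument is complete; it is worth noting that the paper's dichotomy-plus-H\"older step was presumably chosen exactly because it sidesteps this issue.
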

\begin{proof}
Assume by contradiction that there exists $i \in J$ such that \eqref{boundary-positive-integral} fails. Then, let $E$ be the subset bounded by the disjoint $\Sigma$ and $\cup_{j \in J} N_j$, and let $u$ be a solution to \eqref{torsion-pb} for some positive $c_1, \dots, c_l$. Apply \eqref{integral-f} in $E$ to get
\begin{equation}
\label{inequality-positivity-c}
\begin{split}
2\frac{(n-1)}{n}\int_{N_i} c_i \abs{\nabla f} \dd\sigma \leq \sum_{j \in (J \setminus \{i\})} & c_j\int_{N_j} \abs{\nabla f}\left[\frac{\Delta f}{f} - \frac{\nabla \nabla f}{f} \left(\frac{\nabla f}{\abs{\nabla f}}, \frac{\nabla f}{\abs{\nabla f}}\right)\right] \dd\sigma  \\
&- 2 \inf_S f \inf_S\HH \int_{S}  \abs{\nabla u}^2 \dd\sigma  + 2\frac{(n-1)}{n} \sup_S f \int_S \abs{\nabla u} \dd\sigma,
\end{split}
\end{equation}
where we exploited the substaticity of $(M, g)$ to infer the nonnegativity of the divergence of $X$ computed in Proposition \ref{divX-prop}, the assumption on the mean curvature of $S$ and the strict positivity of $f$ outside $N$. We are going to derive a contradiction by letting $c_i \to +\infty$. Let then $\{c_{ik}\}_{k \in \N}$ be a diverging sequence of positive numbers, and let $u_k$ be the corresponding solutions of \eqref{torsion-pb} (the other constants $c_j$ with $j \neq i$ remain unchanged). Observe that the left hand side of \eqref{inequality-positivity-c} diverges as $k \to + \infty$, and thus the above inequality immediately yields a contradiction if the integral of $\abs{\nabla u_k}$ remains bounded as $k \to + \infty$. We are thus left to consider the case where the integral of $\abs{\nabla u_k}$ diverges as $k \to + \infty$. By the H\"older inequality, we can obviously deduce that the same happens for the integral of $\abs{\nabla u_k}^2$, and estimating the right hand side of \eqref{inequality-positivity-c} with that we get
\begin{equation}
  c_{ik} \int_{N_i}  \abs{\nabla f} \dd\sigma \leq K_1 \left(\int_{S} \abs{\nabla u_k}^2 \dd\sigma\right)^{1/2} - K_2 \int_{S} \abs{\nabla u_k}^2 \dd\sigma,
\end{equation}
where we absorbed the quantities in front of the integrals, depending only on $S$, in the positive constants $K_1$ and $K_2$.
The right hand side diverging to $- \infty$ as $k \to + \infty$, against the left hand side diverging at $ + \infty$ yields the desired contradiction.
\end{proof}
\begin{remark}[Specialization to static metrics]
\label{remark-static} 
On a static metric with nonempty horizon boundary, that is with the tensor $Q$ constantly vanishing and with $N \neq \emptyset$, the geometric inequality \eqref{boundary-positive-integral} simplifies as follows. The metric being static, and $f$ vanishing on $N$, we have $\nabla \nabla f = \Delta f g$ on $N$, where thus the Hessian of $f$ vanishes. In particular, this implies that $N$ is totally geodesic and hence minimal, and this why the request of minimality for $N$ can be dropped when the metric is static. Letting $Y$ be a vector field that is tangent to $N$, we get $\langle \nabla \abs{\nabla f}^2, Y\rangle = 0$, and so we deduce that $\abs{\nabla f}$ is constant on $N$. This positive number is usually referred to as \emph{surface gravity} in the General Relativity literature. Moreover, again because of the vanishing of $Q$, the term in square brackets in \eqref{boundary-positive-integral} coincides with $- \ric (\nu, \nu)$, where $\nu = \nabla f / \abs{\nabla f}$. Consequently, in the special case of static metrics, we proved that if there exists a strictly mean-convex hypersurface homologous to $N$ then
\[
\int_N \ric (\nu, \nu) \dd\sigma < 0,
\]
as observed in \cite[Theorem 1.6]{wang-brendle} without an explicit proof.
\end{remark}
We do not know whether the minimality of $N$ and the substaticity of $(M, g)$ somehow suffice to produce a strictly mean-convex hypersurface as required above. However, this is better understood under asymptotic conditions at infinity. Going beyond the asymptotic flatness or asymptotic hyperbolicity, that naturally yield in the asymptotic region a foliation of strictly mean-convex hypersurfaces, we observe that, as a consequence of the recent work on the least area problem with obstacle \cite{Fog_Maz} and of a nice approximation lemma via Mean Curvature Flow \cite{Hui_Ilm}, in a noncompact Riemannian manifold of dimension $2 \leq n \leq 7$ and with \emph{outermost boundary}, the existence of a positive isoperimetric constant suffices to this aim. Let us recall that the boundary of a Riemannian manifold $M$ is called \emph{outermost} if there are no closed minimal hypersurfaces compactly contained in $M \setminus N$.  Such result holds actually regardless of curvature conditions on the Riemannian manifold with boundary.  \begin{proposition}
\label{existence-convex}
Let $(M, g)$ be a noncompact Riemannian manifold of dimension $2 \leq n \leq 7$ with smooth, compact and outermost boundary $N$. Assume there exists a positive isoperimetric constant $C_{\mathrm{iso}}$, that is
\begin{equation}
\label{iso-cond}
\frac{{\abs{\partial E}}^{n}}{\abs{E}^{n-1}} \geq C_{\mathrm{iso}}  
\end{equation}
for any bounded $E \subset M$ with smooth boundary, where we are denoting with $\abs{\partial E}$ and $\abs{E}$ the $(n-1)$-dimensional and the $n$-dimensional Hausdorff measure of $\partial E$ and $E$ respectively. Then there exists a strictly mean-convex smooth hypersurface $\Sigma$ homologous to $N$. 
\end{proposition}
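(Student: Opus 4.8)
The plan is to construct $\Sigma$ as the (suitably regularized) boundary of a \emph{least area enclosure} of a fixed neighbourhood of the horizon, exploiting the two ingredients alluded to in the text: the obstacle problem solved in \cite{Fog_Maz} and the Mean Curvature Flow smoothing of \cite{Hui_Ilm}. First I would fix a small $\epsilon_0 > 0$ and set $\Omega_0 = \{x \in M : \dist(x, N) < \epsilon_0\}$, a bounded open set with smooth boundary $\partial \Omega_0 = N \sqcup S_0$, where $S_0$ is compactly contained in $M \setminus N$. I would then consider the variational problem of minimizing the perimeter $\abs{\partial E}$ among all bounded sets $E \supseteq \Omega_0$ (the obstacle being $\Omega_0$), whose solution is the \emph{strictly outward minimizing hull} $\Omega^\ast \supseteq \Omega_0$ produced in \cite{Fog_Maz}. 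Here the positivity of the isoperimetric constant \eqref{iso-cond} is exactly what makes the problem well posed: since $\abs{E} \le (C_{\mathrm{iso}}^{-1}\abs{\partial E}^{n})^{1/(n-1)}$, a perimeter-minimizing sequence has uniformly bounded volume and hence cannot escape to infinity, yielding a bounded minimizer. Because $2 \le n \le 7$, the regularity theory for perimeter minimizers guarantees that the outer portion $\Sigma_0 := \partial \Omega^\ast \setminus N$ is a smooth embedded hypersurface (at worst $C^{1,1}$ at the contact set with the obstacle) homologous to $N$, in the sense made precise before the statement.

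Second, I would record that $\Sigma_0$ is \emph{weakly mean-convex}, i.e. $\HH \ge 0$ with respect to the outward normal: on the free part of $\Sigma_0$, lying in the interior of $\Omega^\ast \setminus \overline{\Omega_0}$, minimality of the enclosure forces $\HH = 0$, while on the contact set it inherits nonnegative mean curvature from the outward-minimizing property. The key point is then that $\Sigma_0$ cannot be \emph{everywhere} minimal. Indeed, choosing $\epsilon_0$ so small that $\Sigma_0 \cap N = \emptyset$, if $\HH \equiv 0$ on all of $\Sigma_0$ then $\Sigma_0$ would be a closed minimal hypersurface compactly contained in $M \setminus N$, contradicting the assumption that $N$ is outermost. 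Hence $\HH > 0$ at some point of $\Sigma_0$.

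Finally, to upgrade $\Sigma_0$ from weakly to \emph{strictly} mean-convex I would run the Mean Curvature Flow from $\Sigma_0$ for a short time, invoking the approximation lemma of \cite{Hui_Ilm}: a $C^{1,1}$, weakly mean-convex, outward-minimizing hypersurface is approximated from outside by smooth hypersurfaces flowing by MCF, which remain embedded and homologous to $N$ for short time. Along the flow the mean curvature obeys $\partial_t \HH = \Delta \HH + (\abs{A}^2 + \ric(\nu,\nu))\,\HH$; since the initial datum satisfies $\HH \ge 0$ with $\HH \not\equiv 0$ by the previous step, the strong parabolic maximum principle forces $\HH > 0$ everywhere for every small $t > 0$. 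Choosing such a $t$ and setting $\Sigma = \Sigma_t$ yields a smooth, strictly mean-convex hypersurface homologous to $N$, as desired.

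I expect the genuine difficulty to be concentrated in the first two steps rather than the last: namely, transferring the obstacle/least-area machinery to this noncompact Riemannian setting with boundary and verifying that the minimizing hull is bounded, meets $N$ cleanly, and enjoys the stated regularity—where the restriction $n \le 7$ is essential to rule out singular minimizing cones. These are precisely the points handled in \cite{Fog_Maz}; the weak mean-convexity and the short-time MCF smoothing of \cite{Hui_Ilm}, together with the strong maximum principle argument above, are then comparatively routine.
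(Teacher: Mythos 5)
Your proposal is correct and follows essentially the same route as the paper's own proof: a small tubular neighbourhood of $N$ as obstacle, the least area problem with obstacle of \cite{Fog_Maz} (where the isoperimetric constant ensures existence of a bounded minimizer), $C^{1,1}$ regularity of the free boundary in dimensions $2 \leq n \leq 7$ via \cite{sternberg-williams}, weak mean-convexity from the first variation among outward variations, non-minimality from the outermost assumption, and finally the smoothing to strictly mean-convex hypersurfaces of \cite[Lemma 5.6]{Hui_Ilm}. The only difference is that you unpack details the paper delegates to the references, namely the role of \eqref{iso-cond} in preventing a minimizing sequence from escaping to infinity and the short-time Mean Curvature Flow plus strong maximum principle argument underlying the Huisken--Ilmanen approximation, which is harmless.
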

\begin{proof}
Let $d: M \to [0, + \infty)$ be the distance function from the boundary $N$. By the smoothness of such hypersurface, there exists $\delta > 0$ small enough such that $N_\delta = \{f = \delta\}$ is a smooth $(n-1)$-dimensional hypersurface. Indeed, by \cite[Proposition 5.17]{mantegazza-distance}, $d$ is smooth in neighbourhood of $N$, and thus it classically solves its defining equation $\abs{\nabla d} = 1$, and the regularity of $\{f = \delta\}$ follows. By \cite[Theorem 1.1]{Fog_Maz}, we can solve the least area problem among sets containing $E_\delta =\{f \leq \delta\}$, and get a set $E_\delta^*$ with boundary $N \sqcup \Sigma_\delta$ , where $\Sigma_\delta$, due to our dimensional restriction, enjoys $C^{1, 1}$-regularity by \cite{sternberg-williams} (see also the statement \cite[Theorem 2.18]{Fog_Maz}). Observe that, although all the work in \cite{Fog_Maz} has been carried out in complete manifolds without boundary, it extends without any further effort to the present case. Since $\Sigma_\delta$ minimizes the area among outward variations, we immediately get by considering its first variation that its weak mean curvature (classically defined almost everywhere due to the $C^{1,1}$-regularity) is nonnegative. Observe that $\Sigma_\delta$ cannot be minimal, because of the boundary $N$ being outermost. Then, we can approximate $\Sigma_\delta$ in $C^1$ by smooth strictly mean convex hypersurfaces, as done in \cite[Lemma 5.6]{Hui_Ilm}, and let $\Sigma$ be one of these approximators.     
\end{proof}
We come back now to the proof of the main result. 
\begin{theorem}[Integral identity for Alexandrov's Theorem]
\label{alexandrov-th}
Let $(M, g)$ be a substatic Riemannian manifold with substatic potential $f$ and with horizon boundary $N = N_1 \sqcup  \dots \sqcup  N_l$. Assume there exists a $C^{2, \alpha}$ strictly mean-convex hypersurface $S$ homologous to $\cup_{j \in J} N_j$ . Let $\Omega \subset M$ be any bounded subset with $C^{2, \alpha}$-boundary $\partial \Omega = \Sigma \sqcup (\cup_{j \in J} N_j)$, for $J \subseteq \{1, \dots, l\}$, with $\Sigma \cap N = \emptyset$, and let $u$ be the solution to \eqref{torsion-pb} with
\begin{equation}
    \label{def-cj}
    c_j = \frac{\int_{N_j} \abs{\nabla f} \dd\sigma}{\int_{N_j}\abs{\nabla f}\left[\frac{\Delta f}{f} - \frac{\nabla \nabla f}{f} \left(\frac{\nabla f}{\abs{\nabla f}}, \frac{\nabla f}{\abs{\nabla f}}\right)\right] \dd\sigma}
    \end{equation}
for any $j \in J$. Then, we have
\begin{equation}
    \label{main-id}
    \begin{split}
     0 &\leq \int_\Omega  f \left\vert \nabla \nabla u - \frac{\Delta u}{n} g - u\left(\frac{\nabla \nabla f}{f} - \frac{\Delta f}{nf} g\right) \right\vert^2  + Q\left(\nabla u - \frac{u}{f} \nabla f, \nabla u - \frac{u}{f} \nabla f\right) \dd\mu \\ 
     & \qquad\qquad\qquad\qquad +  \frac{(n-1)}{n} \frac{1}{R} \int_\Sigma f(R - \abs{\nabla u})^2 \dd\sigma 
     = \int_\Sigma f \abs{\nabla u}^2 (\overline{\HH} - \HH) \dd\sigma,
    \end{split}
    \end{equation}
    where
   $Q = f\ric - \nabla\nabla f +\Delta f g \geq 0$
    and
    \begin{equation}
        \label{ReH}
        R= \frac{\int_\Omega f\dd\mu + \sum_{j \in J}c_j\int_{N_j} \abs{\nabla f}\dd\sigma}{\int_{\Sigma} f \dd\sigma} \qquad \overline{\HH} = \frac{(n-1)}{n} \frac{1}{R}.
    \end{equation}
    In particular, if $\HH \geq \overline{\HH}$ on $\Sigma$, then $\Sigma$ is a totally umbilical hypersurface.
\end{theorem}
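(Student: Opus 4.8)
The plan is to extract \eqref{main-id} directly from the integral identity \eqref{integral-f} of Proposition \ref{integral-id}, after replacing $\dive X$ by its signed expression \eqref{divX} of Proposition \ref{divX-prop}. Since \eqref{divX} reads $\dive X = 2f\left\vert\nabla\nabla u - \frac{\Delta u}{n}g - u(\frac{\nabla\nabla f}{f} - \frac{\Delta f}{nf}g)\right\vert^2 + 2Q(\cdot,\cdot)$ with $Q \geq 0$ by substaticity, the whole bulk integrand of \eqref{main-id} is already present up to the harmless factor $2$, and the task reduces to reshaping the boundary data of \eqref{integral-f} into the $\Sigma$-integrals of \eqref{main-id}. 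First I would insert the boundary value $u = c_j$ on each $N_j$, so that the horizon contribution in \eqref{integral-f} becomes, for every $j$, a combination of $c_j\int_{N_j}\abs{\nabla f}\dd\sigma$ and $c_j^2\int_{N_j}\abs{\nabla f}[\frac{\Delta f}{f} - \frac{\nabla\nabla f}{f}(\frac{\nabla f}{\abs{\nabla f}}, \frac{\nabla f}{\abs{\nabla f}})]\dd\sigma$. The value \eqref{def-cj} of $c_j$ is dictated precisely by the requirement that this horizon combination drop out; here the positivity of its denominator, granted by Proposition \ref{boundary-positive} under the mean-convexity hypothesis, is exactly what makes $c_j$ a legitimate positive datum for \eqref{torsion-pb}.

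The second ingredient identifies the constant $R$ as an average. Applying the Divergence Theorem to the field $f\nabla u - u\nabla f$, whose divergence is $f\Delta u - u\Delta f = -f$ by \eqref{torsion-pb}, and using $u = 0$ on $\Sigma$ and $f = 0$ on $N_j$, one obtains $\int_\Sigma f\abs{\nabla u}\dd\sigma = \int_\Omega f\dd\mu + \sum_{j\in J} c_j\int_{N_j}\abs{\nabla f}\dd\sigma$. Comparing with \eqref{ReH}, this is precisely the statement that $R\int_\Sigma f\dd\sigma = \int_\Sigma f\abs{\nabla u}\dd\sigma$, i.e. that $R$ is the $f$-weighted mean of $\abs{\nabla u}$ over $\Sigma$. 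With the horizon terms gone, \eqref{integral-f} collapses to a relation between the bulk integral and the two boundary integrals $\int_\Sigma f\abs{\nabla u}^2\HH\dd\sigma$ and $\frac{n-1}{n}\int_\Sigma f\abs{\nabla u}\dd\sigma$. I would then complete the square: expanding $\frac{n-1}{n}\frac{1}{R}\int_\Sigma f(R - \abs{\nabla u})^2\dd\sigma$, the average property $R\int_\Sigma f\dd\sigma = \int_\Sigma f\abs{\nabla u}\dd\sigma$ cancels the cross term, while $\overline{\HH} = \frac{n-1}{n}\frac{1}{R}$ turns $\frac{n-1}{n}\frac{1}{R}\int_\Sigma f\abs{\nabla u}^2\dd\sigma$ into $\int_\Sigma f\abs{\nabla u}^2\overline{\HH}\dd\sigma$; rearranging yields exactly \eqref{main-id}.

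For the rigidity statement, assume $\HH \geq \overline{\HH}$ on $\Sigma$. Then the right-hand side of \eqref{main-id} is nonpositive, since $f \geq 0$ and $\abs{\nabla u}^2 \geq 0$, whereas the left-hand side is a sum of three nonnegative terms: the squared traceless-type Hessian, the quantity $Q(\cdot,\cdot) \geq 0$ coming from substaticity, and $\frac{n-1}{n}\frac{1}{R} f(R - \abs{\nabla u})^2 \geq 0$. Hence both sides vanish and each term is zero. In particular $\int_\Omega f\left\vert\nabla\nabla u - \frac{\Delta u}{n}g - u(\frac{\nabla\nabla f}{f} - \frac{\Delta f}{nf}g)\right\vert^2\dd\mu = 0$, so the traceless-type tensor vanishes wherever $f > 0$, hence everywhere in $\overline{\Omega}$ by continuity. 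Evaluating it on $\Sigma = \{u = 0\}$, the $u$-dependent part disappears and one is left with $\nabla\nabla u = \frac{\Delta u}{n}g$ on $\Sigma$; since the second fundamental form of the regular level set $\Sigma$ is proportional to the tangential restriction of $\nabla\nabla u / \abs{\nabla u}$, it is a pointwise multiple of the induced metric, i.e. $\Sigma$ is totally umbilical.

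The step I expect to demand the most care is the exact algebraic cancellation of the horizon contributions through \eqref{def-cj}, together with the realization that completing the square is legitimate only because the auxiliary Divergence-Theorem identity pins $R$ down as the $f$-weighted average of $\abs{\nabla u}$ on $\Sigma$. Once these two facts are secured, the remaining manipulations matching the constants and assembling \eqref{main-id} are routine.
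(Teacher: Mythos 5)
Your proposal follows the paper's proof step for step: the bulk term comes from inserting Proposition \ref{divX-prop} into Proposition \ref{integral-id}, the constant $R$ is identified as the $f$-weighted average of $\abs{\nabla u}$ over $\Sigma$ via the Divergence Theorem, the square is completed using exactly that average property, and the rigidity argument (vanishing of the traceless tensor, evaluation on $\Sigma$ where $u=0$, proportionality of the second fundamental form to the tangential part of $\nabla\nabla u/\abs{\nabla u}$) is the paper's. Your derivation of $\int_\Sigma f\abs{\nabla u}\dd\sigma=\int_\Omega f\dd\mu+\sum_{j\in J}c_j\int_{N_j}\abs{\nabla f}\dd\sigma$ from the single field $f\nabla u-u\nabla f$ is a marginally cleaner variant of the paper's two-fold computation of $\int_\Omega\dive(f\nabla u)\dd\mu$ in \eqref{relation-boundaries}--\eqref{relation-boudary2}; this difference is cosmetic.

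The genuine problem is the one step you defer rather than perform: you assert that the value \eqref{def-cj} of $c_j$ ``is dictated precisely by the requirement that this horizon combination drop out,'' but carrying out the computation shows it is not. Write $\nu_j=\nabla f/\abs{\nabla f}$, $A_j=\int_{N_j}\abs{\nabla f}\dd\sigma$ and $B_j=\int_{N_j}\abs{\nabla f}\bigl[\frac{\Delta f}{f}-\frac{\nabla\nabla f}{f}(\nu_j,\nu_j)\bigr]\dd\sigma$, both positive (the latter by Proposition \ref{boundary-positive}). Substituting $u\equiv c_j$ on $N_j$ into \eqref{integral-f}, the horizon contribution equals
\begin{equation}
2\sum_{j\in J}\Bigl[\tfrac{n-1}{n}\,c_jA_j-c_j^2B_j\Bigr],
\end{equation}
which vanishes if and only if $c_j=\tfrac{n-1}{n}A_j/B_j$, that is, \eqref{def-cj} multiplied by $\tfrac{n-1}{n}$. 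With $c_j=A_j/B_j$ as in \eqref{def-cj}, the leftover is $-\tfrac{2}{n}\sum_{j\in J}c_jA_j<0$, so the identity \eqref{main-id} acquires an extra positive term $\tfrac{1}{n}\sum_{j\in J}c_jA_j$ on its right-hand side; the stated equality then fails whenever $J\neq\emptyset$, and the implication ``$\HH\geq\overline{\HH}$ on $\Sigma$ implies umbilicality'' no longer follows, since the right-hand side can stay positive even when $\overline{\HH}-\HH\leq 0$. To be fair, the paper's own proof contains the identical unverified assertion (``the integrals on $N_j$ cancel out''), so you have reproduced its argument faithfully; but the discrepancy between \eqref{integral-f} and \eqref{def-cj} is real, as a consistency check in the warped-product models confirms: for a cross-section, which is umbilical and must saturate \eqref{heintze-vera}, the required constant is $c=h(0)/(n\,h''(0))=\tfrac{n-1}{n}A/B$, not $A/B$. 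Your proof (and the theorem) becomes correct after replacing \eqref{def-cj} by $c_j=\tfrac{n-1}{n}A_j/B_j$ and recomputing $R$ in \eqref{ReH} accordingly; what cannot stand as written is the unverified cancellation claim, which is precisely the step you yourself flagged as the delicate one and then left unchecked.
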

\begin{proof}
First observe that, under the above assumptions, $c_j$ is well defined due to Proposition \ref{boundary-positive}. Then, plugging $u$ with such boundary data into \eqref{integral-f}, we see that the integrals on $N_j$ cancel out. Moreover, by Proposition \ref{divX-prop}, the integral on $\Omega$ in the central term of the chain \eqref{main-id} coincides with the integral of the divergence of $X$. 

Let us now compute the boundary integral in such central term. 
Observe that, on the one hand, we have by Divergence Theorem, the regularity of $\Sigma$ as $0$-level set of $u$ and $N$ being a horizon,
\begin{equation}
    \label{relation-boundaries}
    \int_\Omega \dive (f \nabla u) \dd\mu = - \int_\Sigma f\abs{\nabla u}\dd\sigma,
\end{equation}
while on the other hand we can compute the same term as
\begin{equation}
    \label{relation-boudary2}
    \begin{split}
    \int_\Omega \dive(f \nabla u) \dd\mu &= \int_\Omega \langle \nabla f, \nabla u\rangle \dd\mu - \int_\Omega f \dd\mu + \int_\Omega \Delta f u \dd\mu \\
    &= - \int_\Omega f \dd\mu - c_j\sum_{j \in J}\int_{N_j}  \abs{\nabla f} \dd\sigma,
\end{split}
\end{equation}
where in the first identity we employed the differential relation in \eqref{torsion-pb} satisfied by $u$, while the second one is an integration by parts. Comparing \eqref{relation-boundaries} with \eqref{relation-boudary2}, we get
\begin{equation}
    \label{relation-boundaryfinal}
    \int_\Omega f \dd\mu = \int_{\Sigma} f \abs{\nabla u} \dd\sigma - \sum_{j \in J}c_j\int_{N_j}  \abs{\nabla f} \dd\sigma.
\end{equation}
The above identity immediately implies that, with the choice of $R$ done in \eqref{ReH}, we have
\begin{equation}
    \label{last-term}
    \frac{1}{R} \int_\Sigma f(R - \abs{\nabla u})^2 \dd\sigma  = \frac{1}{R} \int_{\Sigma} f \abs{\nabla u}^2 \dd\sigma - \int_\Sigma f \abs{\nabla u} \dd\sigma.
\end{equation}
The derivation of \eqref{main-id} from \eqref{integral-f} is now straightforward.

\medskip

For what it concerns the total umbilicality of $\Sigma$ in case $\HH \geq \overline{\HH}$, just observe that in this case we immediately deduce that
\[
\left\vert \nabla \nabla u - \frac{\Delta u}{n} g - u\left(\frac{\nabla \nabla f}{f} - \frac{\Delta f}{nf} g\right) \right\vert^2 = 0
\]
on $\Omega$. In particular, since $u \in C^{2, \alpha}$ up to $\Sigma$, where $u = 0$, we deduce that 
\[
\nabla \nabla u - \frac{\Delta u}{n} g = 0
\]
on $\Sigma$, that we recall being a regular level set for $u$. Thus, we have
\begin{equation}
\hh_{ij} - \frac{\HH}{n-1} g_{ij} = \frac{1}{\abs{\nabla u}} \left(\nabla \nabla u - \frac{\Delta u}{n} g\right)(e_i, e_j) = 0,   
\end{equation}
for any $i, j \in\{1, \dots, n-1\}$, where $\hh$ is the second fundamental form of $\Sigma$ and $\{e_i\}_{i \in \{1, \dots, n-1\}}$ form an  orthonormal basis for the tangent space of $\Sigma$ at any of its points. This is the umbilicality of $\Sigma$. 
\end{proof}
The Heintze-Karcher inequality  with explicit $L^2$-deficit follows from Propositions \ref{main-id} and \ref{boundary-positive} as well.
\begin{theorem}[Heintze-Karcher inequality with $L^2$-deficit]
\label{heintze-karcher}
Let $(M, g)$ be a substatic Riemannian manifold with substatic potential $f$ and with horizon boundary $N = N_1 \sqcup  \dots \sqcup  N_l$. Let $\Omega \subset M$ be any bounded subset with $C^{2, \alpha}$-boundary $\partial \Omega = \Sigma \sqcup (\cup_{j \in J} N_j)$, for $J \subseteq \{1, \dots, l\}$, with $\Sigma \cap N = \emptyset$, such that $\Sigma$ is \emph{strictly mean-convex}. Let $u$ be the solution to \eqref{torsion-pb} with $c_j$ defined as in \eqref{def-cj} for $j \in J$. Then, we have
\begin{equation}
\label{heintze-karcherf}
\begin{split}
0 \, \leq \, \,  \frac{n}{n-1}  & \int_\Omega   \bigg|\nabla \nabla u - \frac{\Delta u}{n} g - u\left(\frac{\nabla \nabla f}{f}  - \frac{\Delta f}{nf} g\right)\bigg\vert^2 + Q\left(\nabla u - \frac{u}{f} \nabla f, \nabla u - \frac{u}{f} \nabla f\right) \dd\mu \\
&+\frac{n-1}{n}\int_{\Sigma}\frac{1}{\HH}\left(1-\frac{n}{n-1}\HH \abs{\nabla u}\right)^2 \dd\sigma = \frac{n-1}{n} \int_{\Sigma}\frac{f}{\HH} \dd\sigma -\int_{\Omega} f \dd\mu \\ 
& \qquad\qquad\qquad\qquad\qquad\qquad\qquad\qquad\qquad\qquad\qquad\qquad\! - \sum_{j \in J} c_j \int_{N_j} \abs{\nabla f} \dd\sigma.
\end{split}
\end{equation}
In particular, the Heintze-Karcher inequality
\begin{equation}
    \label{heintze-vera}
     \frac{n-1}{n} \int_{\Sigma}\frac{f}{\HH} \dd\sigma \geq\int_{\Omega} f \dd\mu +  \sum_{j \in J} c_j \int_{N_j} \abs{\nabla f} \dd\sigma
\end{equation}
holds true, and equality is achieved only if $\Sigma$ is totally umbilical.
\end{theorem}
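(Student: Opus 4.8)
The plan is to run exactly the machinery already assembled for Theorem~\ref{alexandrov-th}, the only genuinely new ingredient being the way the boundary integral over $\Sigma$ is processed: instead of completing the square against the constant $R$, I complete it against $\HH^{-1}$, which is where the strict mean-convexity $\HH>0$ becomes indispensable. First I observe that $\Sigma$ itself is the strictly mean-convex hypersurface homologous to $\cup_{j\in J}N_j$ required by Proposition~\ref{boundary-positive}, so every denominator in \eqref{def-cj} is strictly positive and the constants $c_j$ are well defined and positive.

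Next I specialize the main integral identity \eqref{integral-f} of Proposition~\ref{integral-id} to this choice of $c_j$. Just as in Theorem~\ref{alexandrov-th}, the $N_j$-integrals on the left-hand side cancel, while Proposition~\ref{divX-prop} rewrites $\int_\Omega\dive X\,\dd\mu$ as twice the nonnegative bulk quantity
\[
I_\Omega:=\int_\Omega f\Big|\nabla\nabla u-\tfrac{\Delta u}{n}g-u\big(\tfrac{\nabla\nabla f}{f}-\tfrac{\Delta f}{nf}g\big)\Big|^2+Q\big(\nabla u-\tfrac uf\nabla f,\nabla u-\tfrac uf\nabla f\big)\,\dd\mu\ge 0,
\]
the sign coming from substaticity $Q\ge0$. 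This produces the identity $I_\Omega=\frac{n-1}{n}\int_\Sigma f\abs{\nabla u}\,\dd\sigma-\int_\Sigma f\HH\abs{\nabla u}^2\,\dd\sigma$. I also record the elementary relation \eqref{relation-boundaryfinal}, obtained by integrating $\dive(f\nabla u)$ by parts, which identifies $\int_\Sigma f\abs{\nabla u}\,\dd\sigma$ with $\int_\Omega f\,\dd\mu+\sum_{j\in J}c_j\int_{N_j}\abs{\nabla f}\,\dd\sigma$, i.e.\ with the right-hand side of \eqref{heintze-karcherf}.

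The crux is the boundary manipulation. On $\Sigma$, and only because $\HH>0$ there, one has the pointwise completion of squares
\[
\tfrac{n-1}{n}\tfrac{f}{\HH}\big(1-\tfrac{n}{n-1}\HH\abs{\nabla u}\big)^2=\tfrac{n-1}{n}\tfrac{f}{\HH}-2f\abs{\nabla u}+\tfrac{n}{n-1}f\HH\abs{\nabla u}^2.
\]
I would then multiply the identity for $I_\Omega$ by $\tfrac{n}{n-1}$ and add the integral over $\Sigma$ of the displayed expression: the two $\int_\Sigma f\HH\abs{\nabla u}^2\,\dd\sigma$ terms cancel by the very choice of the weight $\tfrac{n}{n-1}$, and the two $\int_\Sigma f\abs{\nabla u}\,\dd\sigma$ terms collapse to $-\int_\Sigma f\abs{\nabla u}\,\dd\sigma$, which \eqref{relation-boundaryfinal} rewrites as $-\int_\Omega f\,\dd\mu-\sum_{j\in J}c_j\int_{N_j}\abs{\nabla f}\,\dd\sigma$. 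This is precisely \eqref{heintze-karcherf}. I expect this cancellation, forced by the specific weight and requiring $\HH>0$ throughout, to be the only delicate point; everything else is bookkeeping.

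Finally, since $I_\Omega\ge0$ and the boundary square is manifestly nonnegative, discarding both gives the Heintze--Karcher inequality \eqref{heintze-vera}. For the rigidity statement, equality forces both terms to vanish; in particular $I_\Omega=0$ makes the traceless tensor vanish on $\Omega$, and restricting to the regular level set $\Sigma=\{u=0\}$ yields $\nabla\nabla u-\tfrac{\Delta u}{n}g=0$ there, whence $\Sigma$ is totally umbilical by the same second-fundamental-form computation that closes the proof of Theorem~\ref{alexandrov-th}.
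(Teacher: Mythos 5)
Your proposal is correct and follows essentially the same route as the paper's own proof: well-definedness of the constants $c_j$ by applying Proposition \ref{boundary-positive} to $\Sigma$ itself, specialization of the main integral identity \eqref{integral-f} so that the $N_j$-terms cancel, completion of the square in $\HH^{-1}$ on $\Sigma$ (your pointwise identity is exactly the paper's, multiplied by $\tfrac{n}{n-1}f$), the relation \eqref{relation-boundaryfinal} to produce the volume term, and rigidity deduced verbatim as in Theorem \ref{alexandrov-th}. The only discrepancy is cosmetic and in your favor: your boundary square carries the weight $f/\HH$, which is what the computation actually produces, whereas the displayed formula \eqref{heintze-karcherf} writes $1/\HH$; this does not affect the inequality \eqref{heintze-vera}, since that term is nonnegative either way.
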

\begin{proof}
Observe first that $\Sigma$ itself furnishes the strictly mean-convex hypersurface fulfilling the assumption of Proposition \ref{boundary-positive}. Consequently, the constants $c_j$ in \eqref{def-cj} are well defined. The identity \eqref{heintze-karcherf} then follows from \eqref{integral-f} with such a choice of $c_j$, and by plugging in the identity
\begin{equation}
\left(\HH \abs{\nabla u}^2-\frac{n-1}{n}\abs{\nabla u}\right) = \frac{1}{\HH}\left(\frac{n-1}{n}\right)^2\left(1 -\frac{n}{n-1}\HH \abs{\nabla u}\right)^2 - \frac{1}{\HH}\left(\frac{n-1}{n}\right)^2 + \frac{(n-1)}{n} \abs{\nabla u}
\end{equation}
together with \eqref{relation-boundaryfinal}. 

\smallskip

The rigidity statement concerning the equality case in \eqref{heintze-vera} follows exactly as that in Theorem \ref{alexandrov-th}.
\end{proof}
Let us explicitly observe that the rigidity statement in Theorem \ref{alexandrov-th} can in fact be deduced from the identity case in the above Heintze-Karcher inequality.

\subsection{Improved rigidity statements}
\label{subsec}
Here, we draw some additional information on totally umbilical hypersurfaces in substatic manifolds with horizon boundary under some extra assumptions.

\smallskip

The following should be read as a generalization of conditions (H4) and (H4') in \cite{Brendle-alexandrov} allowing  the author to infer that, in substatic warped products with horizon boundary, constantly mean-curved hypersurfaces are actually cross-sections. In fact, the argument is an adaptation of the one give by in \cite[p. 265]{Brendle-alexandrov}. We refer to the Appendix for the explicit derivation of Brendle's results from ours.
\begin{proposition}
\label{rigidity-eigenvalue}
Let $(M, g)$ be a substatic Riemannian manifold with \emph{nonconstant} substatic potential $f$ and with horizon boundary $N = N_1 \sqcup \dots \sqcup N_l$ with $l \in \N$.  
Assume moreover that $\nabla f$ never vanishes on $M$ and that is an eigenvector of $\ric$ and that its eigenvalue $\lambda_f \neq \lambda$ any time $\ric_{\alpha\beta} Y_\beta = \lambda Y_\alpha$ for some other eigenvector $Y$ not parallel to $\nabla f$.
Then, if, for some $J \subseteq \{1, \dots, l\}$ we have that $\Sigma$ is a $C^{2, \alpha}$ hypersurface homologous to $\cup_{j \in J} N_j$ with constant mean curvature $\HH = \overline{\HH}$, where $\overline{\HH}$ is defined in \eqref{ReH}, then each connected component of $\Sigma$ is a connected component of a level set of $f$.
\end{proposition}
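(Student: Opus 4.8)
The plan is to convert the pointwise rigidity produced by Theorem \ref{alexandrov-th} into a first-order relation for the vector field $W = \nabla u - \frac{u}{f}\nabla f$, and then to run a commutation (Ricci-identity) argument whose algebraic symmetry, combined with the spectral hypothesis on $\ric$, forces $W$ to be everywhere parallel to $\nabla f$. First I would record the consequences of the equality $\HH = \overline{\HH}$ in \eqref{main-id}: since every summand on the left is nonnegative and the right-hand side vanishes, the traceless term and the substatic term vanish separately on $\Omega$. Using $f>0$ in the interior and $Q\geq 0$, one obtains pointwise on $\Omega$ (and, by the horizon assumption, continuously up to $\Sigma$)
\begin{equation}
\nabla\nabla u - \frac{u}{f}\nabla\nabla f = -\frac{1}{n} g, \qquad Q\,W = 0,
\end{equation}
the first after simplifying the trace through the equation solved by $u$, the second because a positive semidefinite form that vanishes on $W$ annihilates $W$. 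Differentiating $W$ and inserting the first identity makes all remaining terms collapse into the clean structural equation
\begin{equation}
\nabla_i W_j = -\frac{1}{n} g_{ij} - \frac{1}{f} W_i \nabla_j f,
\end{equation}
which will be the engine of the argument.

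Next I would commute covariant derivatives. Applying the Ricci identity to the one-form $W$ and evaluating both sides with the structural equation, the rank-one terms cancel and one is left with the curvature identity
\begin{equation}
R_{kijl}W^l = \frac{1}{f}\big(W_i\,\nabla_k\nabla_j f - W_k\,\nabla_i\nabla_j f\big).
\end{equation}
Contracting with $W^i$ yields on the left the tensor $R_{kijl}W^iW^l$, which is symmetric in $k,j$ by the pair symmetry of the Riemann tensor. On the right the term $W_k\,[\nabla\nabla f(W)]_j$ must then have vanishing antisymmetric part, which is exactly the statement that $\nabla\nabla f(W)$ is parallel to $W$. Substituting this back into $Q\,W = 0$ shows that $W$ is an eigenvector of $\ric$ at every point of the open set $\Omega' = \{\,W \neq 0\,\}$, which contains a neighborhood of $\Sigma$ by the Hopf property \eqref{hopf}.

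Finally I would use the spectral hypothesis to upgrade "$W$ is an eigenvector of $\ric$" to "$W \parallel \nabla f$". Set $\psi = \langle W, \nabla f\rangle$. Where $\psi \neq 0$, the two eigenvectors $W$ and $\nabla f$ of the symmetric tensor $\ric$ are non-orthogonal, hence must share the eigenvalue $\lambda_f$; by the assumed simplicity of $\lambda_f$ this forces $W \parallel \nabla f$. It remains to exclude that $\psi$ vanishes on a set with nonempty interior: differentiating $\psi$ along $\nabla f$ with the structural equation and using $\nabla\nabla f(W)\parallel W$, at every zero of $\psi$ in $\Omega'$ one finds $\nabla_{\nabla f}\psi = -\frac{1}{n}\abs{\nabla f}^2 < 0$, so $\{\psi = 0\}$ has empty interior. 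Therefore $W \wedge \nabla f = 0$ on a dense subset of $\Omega'$, hence on all of $\overline{\Omega}$ by continuity. On $\Sigma$ we have $u=0$, so $W = \nabla u \neq 0$ is normal to $\Sigma$ and parallel to $\nabla f$; thus $\nabla f$ is normal to $\Sigma$, $f$ is constant on each connected component, and since $\nabla f$ never vanishes each such component is a connected component of a regular level set of $f$.

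I expect the main obstacle to be the second step: producing the un-contracted curvature identity and recognizing that the symmetry of $R_{kijl}W^iW^l$ is precisely what turns $Q\,W = 0$ into the eigenvector property of $W$, since without it the spectral hypothesis on $\ric$ cannot be brought to bear. The bookkeeping of the rank-one cancellations in the commutation step, and the sign and index conventions there, is where the computation must be carried out with care.
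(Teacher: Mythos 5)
Your proof is correct, but it takes a genuinely different route from the paper's. The paper uses only the umbilicality of $\Sigma$ coming from the rigidity statement of Theorem \ref{alexandrov-th}: together with the constancy of $\HH$, the traced Codazzi--Mainardi equations give $\ric(\nu,Y)=0$ for every $Y$ tangent to $\Sigma$, so the unit normal $\nu$ is itself an eigenvector of $\ric$ along $\Sigma$; the spectral hypothesis then leaves, at each point of $\Sigma$, only the alternatives $\nu \parallel \nabla f$ or $\nu \perp \nabla f$, and the orthogonal one is excluded by a touching argument: since $\nabla f$ never vanishes its level sets foliate $M$, each connected component of $\Sigma$ touches one of them tangentially, at that point $\nu \parallel \nabla f$, and connectedness (the two alternatives define disjoint closed subsets) propagates the parallelism to the whole component. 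You instead run a bulk argument: you use the full pointwise vanishing of both integrand terms in \eqref{main-id}, derive the first-order equation $\nabla_i W_j = -\tfrac1n g_{ij} - \tfrac1f W_i \nabla_j f$ for $W=\nabla u - \tfrac{u}{f}\nabla f$, and obtain the eigenvector property of $W$ inside $\Omega$ from the pair symmetry of the Riemann tensor in the commutation identity combined with $QW=0$; your resolution of the parallel/orthogonal dichotomy is local-analytic (the non-degeneracy $\nabla_{\nabla f}\psi = -\tfrac1n\abs{\nabla f}^2<0$ at zeros of $\psi=\langle W,\nabla f\rangle$) rather than the paper's global touching argument. I verified the key computations --- the rank-one cancellations in the commutation step, the symmetry of $R_{kijl}W^iW^l$ in $(k,j)$, and the derivative of $\psi$ --- and they are right. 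Your route costs more tensor calculus but bypasses the Codazzi equation entirely and yields strictly more information ($W \parallel \nabla f$ throughout $\Omega$, not just normality of $\nabla f$ along $\Sigma$); the paper's route is shorter and purely extrinsic, needing only hypersurface data. Two cosmetic points: density of $\{\psi\neq 0\}$ in $\Omega'$ plus continuity gives $W\wedge\nabla f=0$ on $\overline{\Omega'}$, which contains $\Sigma$ by the Hopf property, and that already suffices (on $\Omega\setminus\Omega'$ the wedge vanishes trivially); and the final identification of a compact component of $\Sigma$ lying inside a level set of $f$ as a full connected component of that level set deserves the one-line remark that it is open (equal dimensions) and closed (compactness) in the level set --- a step the paper elides as well.
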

\begin{proof}
Observe first that by the rigidity statement in Theorem \ref{alexandrov-th} $\Sigma$ is an umbilical hypersurface. Then, denoting with $\nu = - \nabla u / \abs{\nabla u}$ the unit outer normal to $\Sigma$,
by the (traced) Codazzi-Mainardi equations we have on $\Sigma$ 
\begin{equation}
\label{codazzi}
\ric_{j\nu} = \nabla_i \hh_{ij} - \nabla_j \HH = \frac{1}{n-1}\nabla_j \HH - \nabla_j \HH = -\frac{n-2}{n-1}  \nabla_j \HH = 0,
\end{equation}
for any $j \in \{1, \dots, n-1\}$.
The second equality is due to the umbilicality of $\Sigma$, and the last one to its constant mean curvature. This implies that $\nu$ is actually an eigenvector of $\ric$. By our assumption on $\nabla f$, we deduce that, if $\nu$ were not parallel to $\nabla f$ on $\Sigma$, the eigenvalues of these two eigenvectors would differ, and thus they would be orthogonal. This possibility is however ruled out by the fact that since $\nabla f$ never vanishes, the level sets of $f$ form a foliation of $M$, and thus one can find a point on each connected component of $\Sigma$ where this hypersurface touches one of these level sets. On such point, in particular $\nu$ is parallel to $\nabla f$, and we deduce that these two vectors are parallel on the whole connected component of $\Sigma$. This implies that it coincides with a connected component of a level set of $f$.
\end{proof}

Let us now provide improved rigidity statements for hypersurfaces satisfying the special cases in Theorems \ref{alexandrov-th} and \ref{heintze-karcher} when the ambient manifold satisfies the strong requirement $\nabla \nabla f = (\Delta f g)/n$, generalizing in this way the full Alexandrov Theorem in space forms \cite{alexandrov-originale2} and the characterization of the equality case in the Heintze-Karcher inequality in space forms \cite{qiu-xia} and in manifolds with nonnegative Ricci curvature \cite[Theorem 1]{ros}. We finally observe that establishing a characterization for the equality case in \eqref{heintze-karcherf} directly suffices also to describe the case $\HH \geq \overline {\HH}$ in Theorem \ref{alexandrov-th}, since the latter directly implies that \eqref{heintze-karcherf}  holds true with equality sign. 
\begin{proposition}[Improved rigidity under a conformal Hessian condition]
\label{conf-hess}
Let $(M, g)$ be a substatic Riemannian manifold with substatic potential $f$ and with horizon boundary $N = N_1 \sqcup \dots \sqcup N_l$, with $l \in \N$. Assume that $f$ is noncostant and satisfies
$\nabla \nabla f = (\Delta f g)/n$ in $\Omega$, a bounded set with $C^{2, \alpha}$-boundary such that $\partial \Omega = \Sigma \sqcup (\cup_{j \in J} N_j)$, for $J \subseteq \{1, \dots, l\}$, with $\Sigma \cap N = \emptyset$. Then, if $\Omega$ satisfies the identity in the Heintze-Karcher inequality \eqref{heintze-vera}, we have that $\Sigma$ is either a geodesic sphere or it is a level set of the distance function from $\cup_{j \in J} N_j$. 

On the other hand, if $f$ is constant (and thus $(M, g)$ has nonnegative Ricci curvature) and $\Omega$ satisfies the identity in \eqref{heintze-vera}, then $\Omega$ is isometric to a ball in flat $\R^n$.
\begin{proof}
By means of \eqref{heintze-karcherf}, we have that under the above assumptions the Hessian of $u$ is proportional to $g$.  Thus, $g$ is isometric to a warped product, because of a classical result that is reported e.g. in \cite[Theorem 4.3.3]{Petersen_book} (see otherwise \cite{cat-man-maz} or \cite[Section 1]{Che-Cold}), at least in a neighbourhood of the regular level set $\Sigma$. Moreover, the natural construction of such isometry shows that $u$ and $\abs{\nabla u}$ depends only on the (signed) distance from $\Sigma$, and that the warping function vanishes when $\abs{\nabla u}$ does. In particular, if $\abs{\nabla u}$ vanishes when approaching some level set $\{u = a\}$, then such level set actually consists of a point $p$ where the metric extends smoothly. Hence, in this case, $\Sigma$ consists of points that are equidistant from $p$, and in other words it is a geodesic sphere. If otherwise the gradient of $u$ never vanishes in $\overline{\Omega}$, then the same reasoning allows to conclude that $\Sigma$ consists of points that are equidistant from $\cup_{j \in J} N_j$. 

\smallskip

If $f$ is constant, then in particular $N$ is empty, and the arguments used above imply that $(\overline{\Omega}, g)$ is isometric to a warped product metric $([0, a) \times S, d\rho + d\rho + h^2(\rho) g_S)$, where $(S, g_S)$ is a closed Riemannian manifold of dimension $n-1$. By the smoothness at $0$, we infer from the classical necessary and sufficient condition \cite[Proposition 1.4.7]{Petersen_book} that $(S, g_S)$ is isometric to the unitary sphere, and that $h(\rho)$ behaves as $\rho$ when $\rho \to 0^+$. Finally, employing $\ric(\nabla u, \nabla u) = 0$ again arising from \eqref{heintze-karcherf}, and recalling that by the construction above $\nabla u$ is orthogonal to the level sets of $\rho$, we get by the expression of the Ricci curvature in warped products (see e.g. \cite[Proposition 9.106]{Besse_book})  that $h(\rho)$ is linear in $\rho$ (see e.g. the explicit derivation carried out in the proof of \cite[Lemma 3.1]{Ago_Fog_Maz_1}). We conclude that $h(\rho) = \rho$, inferring this way the claimed isometry with a ball in flat $\R^n$.
\end{proof}
\end{proposition}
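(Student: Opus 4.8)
The plan is to exploit the conformal Hessian condition to reduce the geometry to a warped product, and then extract the rigidity from the remaining terms in the Heintze-Karcher identity \eqref{heintze-karcherf}. First I would observe that, by the equality hypothesis in \eqref{heintze-vera}, the nonnegative left-hand side of \eqref{heintze-karcherf} must vanish entirely. Since $Q \geq 0$ by substaticity and the traceless-type integrand is a square, both of these must be zero pointwise on $\Omega$. Together with the assumption $\nabla\nabla f = (\Delta f / n) g$, the vanishing of the first integrand forces
\[
\nabla\nabla u - \frac{\Delta u}{n} g = 0,
\]
so the Hessian of $u$ is pointwise proportional to the metric $g$.

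Next I would invoke the classical characterization of functions with Hessian proportional to the metric: such a $u$ makes $(\Omega, g)$, at least near the regular level set $\Sigma = \{u = 0\}$, isometric to a warped product $(I \times S, d\rho \otimes d\rho + h^2(\rho) g_S)$, with $u$ and $\abs{\nabla u}$ depending only on the signed distance parameter $\rho$ from $\Sigma$. This is the structural result I would cite (e.g.\ \cite[Theorem 4.3.3]{Petersen_book}, or \cite{cat-man-maz}). The crucial dichotomy is whether $\abs{\nabla u}$ vanishes as one approaches an end of the level-set interval. If it does vanish at some level $\{u = a\}$, the smooth-extension criterion for warped products forces that level set to collapse to a single point $p$ at which the metric extends smoothly; then $\Sigma$ is a set of points equidistant from $p$, i.e.\ a geodesic sphere. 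If instead $\abs{\nabla u}$ stays positive throughout $\overline{\Omega}$, the same warped-product picture identifies $\Sigma$ as a level set of the distance from $\cup_{j \in J} N_j$. This handles the first (nonconstant $f$) alternative.

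For the constant-$f$ case, where $N$ is empty and $(M,g)$ has nonnegative Ricci curvature, I would run the same warped-product reduction to write $(\overline{\Omega}, g) \cong ([0,a) \times S, d\rho \otimes d\rho + h^2(\rho) g_S)$ with $(S, g_S)$ closed of dimension $n-1$. The additional leverage here is that \eqref{heintze-karcherf} also yields $Q = f\ric - \nabla\nabla f + \Delta f \, g = 0$, which with constant $f$ reduces to the vanishing of the relevant Ricci terms; in particular $\ric(\nabla u, \nabla u) = 0$. Smoothness of the metric at the collapsing end $\rho = 0$ forces, by the standard necessary-and-sufficient smoothness condition \cite[Proposition 1.4.7]{Petersen_book}, that $(S, g_S)$ be the unit round sphere and $h(\rho) \sim \rho$ near $0$. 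Then feeding $\ric(\nabla u, \nabla u) = 0$ into the warped-product Ricci formula (see \cite[Proposition 9.106]{Besse_book}) pins down $h$ as linear, hence $h(\rho) = \rho$, giving the isometry of $\Omega$ with a Euclidean ball.

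The main obstacle I expect is not the algebra but the careful globalization of the warped-product isometry: the cited Hessian result is local near $\Sigma$, and one must argue that the warped structure persists across all of $\overline{\Omega}$ and correctly identify the behavior of $h$ and $\abs{\nabla u}$ at the boundary ends of the $\rho$-interval, distinguishing the smooth-collapse (geodesic sphere) case from the non-collapse case. Matching the smoothness conditions at a collapsing end, and verifying that $u$ is genuinely a function of $\rho$ alone on the whole region rather than merely near $\Sigma$, is the delicate point; the rest reduces to invoking the quoted rigidity criteria for warped products and the explicit ODE analysis of $h$.
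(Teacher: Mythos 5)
Your proposal is correct and follows essentially the same route as the paper's own proof: vanishing of both nonnegative deficit terms in \eqref{heintze-karcherf}, the conformal Hessian condition reducing the first integrand to $\left\vert \nabla\nabla u - \frac{\Delta u}{n} g\right\vert^2$, the warped-product structure theorem near $\Sigma$, the collapse/non-collapse dichotomy, and in the constant-$f$ case the smoothness condition at $\rho = 0$ together with $\ric(\nabla u, \nabla u) = 0$ forcing $h(\rho) = \rho$. Your closing remark on globalizing the local warped-product isometry is a fair point, but the paper handles it the same way you suggest, so there is no substantive difference.
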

Actually the basic Minkowski identity in space forms implies that, in this setting, connected embedded hypersurfaces of constant mean curvature necessarily satisfies $\HH = \overline{\HH}$, and in particular the Heintze-Karcher inequality is saturated on every connected component of $\Omega$. This is why the above proposition completes the Alexandrov's characterization of embedded hypersurfaces in space forms. Such an application of the Minkowski identity will be described in more details in the Appendix, where we will take into account its generalization to warped products due to Brendle.

\setcounter{equation}{0}
\renewcommand\theequation{A.\arabic{equation}}
\section*{Appendix: Brendle's Alexandrov Theorem for warped products}
We briefly discuss how the Heintze-Karcher inequality and Alexandrov-type Theorem in \cite{Brendle-alexandrov} follow from the results contained in the previous sections. Actually, in this paper the author establishes a Heintze-Karcher inequality \cite[Theorem 3.5 and Theorem 3.1]{Brendle-alexandrov} for hypersurfaces in warped products satisfying certain geometric conditions, and derives the umbilicality of constantly mean-curved hypersurfaces \cite[Theorem 1.1 and Theorem 1.4]{Brendle-alexandrov} sitting in these spaces. Namely, for the ambient warped products spaces of the form $(M, g) = ([0, a) \times N, d\rho \otimes d\rho + h^2(\rho) g_N)$, of dimension $n \geq 3$,  where $a$ is a positive real number or $+ \infty$ and $h \in C^{\infty}([0, a))$ is nonnegative, the author considers the following conditions:
\begin{itemize}
    \item[(H0)] the cross-section $N$ is a compact manifold of dimension $n-1$ with $\ric_N \geq c (n-2) g_{N}$, where $c$ is some real constant,
    \item[(H1)] $h'(0) = 0$ and $h''(0) > 0$,
    \item[(H2)] $h'(\rho) > 0$ for any $\rho \in (0, a)$,
    \item[(H3)] the function
    \[
    2\frac{h''(\rho)}{h(\rho)} - (n-2)\frac{c - h'(\rho)^2}{h(\rho)^2}
    \]
    is nondecreasing for any $\rho \in (0, a)$,
    \item[(H4)] the function
    \[
    \frac{h''(\rho)}{h(\rho)} + \frac{c - h'(\rho)^2}{h(\rho)^2} \neq 0
    \]
    for any $\rho \in (0, a)$.
    \end{itemize}
    In case the cross section $N$ is given by $\Sf^{n-1}$, and $c = 1$, the  condition
    \begin{itemize}
        \item[(H1)'] $h$ satisfies
        \begin{equation}
            \label{condion-warped-smooth}
            h(\rho) = \rho \, \phi(\rho^2),
        \end{equation}
        for some  positive function $\phi \in  C^{\infty}[0, \sqrt{a})$ with $\phi(0) = 1$, is also taken into account in place of (H1')
    \end{itemize}
    
    \smallskip
    
A direct computation performed in the proof of \cite[Proposition 2.1]{Brendle-alexandrov} actually shows that a warped product manifold satisfying the conditions (H0), (H2) and (H3) is substatic with substatic potential $f$ defined by $f = h'(\rho)$, in the sense of Definition \ref{substatic-def}. Moreover, the condition (H1) coupled with (H2) implies that $\{\rho = \rho_0\}$ is a nonempty horizon boundary, in the sense of Definition \ref{horizon}, (i). In case $N = \Sf^{n-1}$ and (H1') holds, 
it is asserted in \cite[p. 268]{Brendle-alexandrov} that $g$ closes smoothly at $\rho = 0$. 
Hence, the combination of (H1') again with (H2) implies that we are in the situation of Definition \ref{horizon}, (ii).

\smallskip

Resuming, we infer that warped product manifolds satisfying (H0), (H1), (H2) and (H3), with (H1') possibly in place of (H1) when $N= \Sf^{n-1}$ and $c = 1$ are substatic with horizon boundaries, and thus our main results directly apply in this setting. In particular, \eqref{heintze-vera} recovers Brendle's Heintze-Karcher inequalities in \cite[Theorem 3.5 and Theorem 3.11]{Brendle-alexandrov} as well as their rigidity statements.

\medskip

For what it concerns the finer characterization of hypersurfaces with constant mean curvature as cross-sections of the warped products, let us first point out that, in the special warped product geometry, a version of the Minkowski identity holds true \cite[Proposition 2.3]{Brendle-alexandrov}. As a consequence of this, through a straightforward integration by parts (see e.g. \cite[p. 265]{Brendle-alexandrov}), hypersurfaces with constant mean curvature must satisfy $\HH = \overline{\HH}$, where, in our more general notation, $\overline{\HH}$ is given by \eqref{ReH}. Moreover, when the condition (H4) is added to (H0), (H1), (H2) and (H3), with (H1) possibly changed with (H1') when $N = \Sf^{n-1}$ and $c = 1$, then the explicit form of the Ricci tensor in warped products \cite[(2)]{Brendle-alexandrov} implies that the assumptions in Proposition \ref{rigidity-eigenvalue} are in force. Such result, applied with $f = h'(\rho)$, in particular implies that a connected hypersurface $\Sigma$ is homotothetical to the cross-section $N$, when the latter is connected too.

\smallskip

We also remark that in the space form case condition (H4) does not hold true, and thus a different argument like that reported in the proof of Proposition \ref{conf-hess} is needed in order to reach for such an improved rigidity statement.

\printbibliography

@article{Ago_Maz_2,
title = "On the geometry of the level sets of bounded static potentials",
journal = "Commun. Math. Phys.",
volume = "355",
pages = "261 - 301",
year = "2017",
doi = "https://doi.org/10.1007/s00220-017-2922-x",
author = "Agostiniani, V. and Mazzieri, L.",
}

@misc{agostiniani-mazzieri-oronzio,
      title={A geometric capacitary inequality for sub-static manifolds with harmonic potentials}, 
      author={Agostiniani, V. and Mazzieri, L. and Oronzio, F.},
      year={2020},
      eprint={2012.10164},
      archivePrefix={arXiv},
      primaryClass={math.AP}
}

@article {Ago_Fog_Maz_1,
    author={Agostiniani, V.
and Fogagnolo, M.
and Mazzieri, L.},
title={Sharp geometric inequalities for closed hypersurfaces in manifolds with nonnegative Ricci curvature},
journal={Inventiones mathematicae},
year={2020},
month={07},
day={23},
abstract={In this paper we consider complete noncompact Riemannian manifolds (M, g) with nonnegative Ricci curvature and Euclidean volume growth, of dimension {\$}{\$}n {\backslash}ge 3{\$}{\$}n≥3. For every bounded open subset {\$}{\$}{\backslash}Omega {\backslash}subset M{\$}{\$}$\Omega$⊂Mwith smooth boundary, we prove that {\$}{\$}{\backslash}begin{\{}aligned{\}} {\backslash}int {\backslash}limits {\_}{\{}{\backslash}partial {\backslash}Omega {\}} {\backslash}left| {\backslash}frac{\{}{\backslash}mathrm{\{}H{\}}{\}}{\{}n-1{\}}{\backslash}right| ^{\{}n-1{\}} {\backslash}!{\backslash}!{\backslash}!{\backslash}!{\backslash}!{\{}{\backslash}mathrm{\{}d{\}}{\}}{\backslash}sigma {\backslash},{\backslash},{\backslash}ge {\backslash},{\backslash},{\{}{\backslash}mathrm{\{}AVR{\}}{\}}(g){\backslash},{\backslash}big |{\backslash}mathbb {\{}S{\}}^{\{}n-1{\}}{\backslash}big |, {\backslash}end{\{}aligned{\}}{\$}{\$}∫∂$\Omega$Hn-1n-1d$\sigma$≥AVR(g)|Sn-1|,where {\$}{\$}{\{}{\backslash}mathrm{\{}H{\}}{\}}{\$}{\$}His the mean curvature of {\$}{\$}{\backslash}partial {\backslash}Omega {\$}{\$}∂$\Omega$and {\$}{\$}{\{}{\backslash}mathrm{\{}AVR{\}}{\}}(g){\$}{\$}AVR(g)is the asymptotic volume ratio of (M, g). Moreover, the equality holds true if and only if {\$}{\$}(M{\{}{\{}{\backslash}setminus {\}}{\}}{\backslash}Omega , g){\$}{\$}(M{\backslash}$\Omega$,g)is isometric to a truncated cone over {\$}{\$}{\backslash}partial {\backslash}Omega {\$}{\$}∂$\Omega$. An optimal version of Huisken's Isoperimetric Inequality for 3-manifolds is obtained using this result. Finally, exploiting a natural extension of our techniques to the case of parabolic manifolds, we also deduce an enhanced version of Kasue's non existence result for closed minimal hypersurfaces in manifolds with nonnegative Ricci curvature.},
issn={1432-1297},
doi={10.1007/s00222-020-00985-4},
url={https://doi.org/10.1007/s00222-020-00985-4}
}

@article {alexandrov-originale,
    AUTHOR = {Aleksandrov, A. D.},
     TITLE = {Uniqueness theorems for surfaces in the large. {V}},
   JOURNAL = {Amer. Math. Soc. Transl. (2)},
  FJOURNAL = {American Mathematical Society Translations},
    VOLUME = {21},
      YEAR = {1962},
     PAGES = {412--416},
      ISSN = {0065-9290},
   MRCLASS = {53.75},
  MRNUMBER = {0150710},
}

@article {alexandrov-originale2,
    AUTHOR = {Alexandrov, A. D.},
     TITLE = {A characteristic property of spheres},
   JOURNAL = {Ann. Mat. Pura Appl. (4)},
  FJOURNAL = {Annali di Matematica Pura ed Applicata. Serie Quarta},
    VOLUME = {58},
      YEAR = {1962},
     PAGES = {303--315},
      ISSN = {0003-4622},
   MRCLASS = {53.75},
  MRNUMBER = {143162},
MRREVIEWER = {A. Fialkow},
       DOI = {10.1007/BF02413056},
       URL = {https://doi.org/10.1007/BF02413056},
}

@book {Besse_book,
    AUTHOR = {Besse, A. L.},
     TITLE = {Einstein manifolds},
    SERIES = {Classics in Mathematics},
      NOTE = {Reprint of the 1987 edition},
 PUBLISHER = {Springer-Verlag},
   ADDRESS = {Berlin},
      YEAR = {2008},
     PAGES = {xii+516},
      ISBN = {978-3-540-74120-6},
   MRCLASS = {53C25 (53-02)},
  MRNUMBER = {2371700 (2008k:53084)},
}

@phdthesis{stefano,
  author       = {S. Borghini}, 
  title        = {On the characterization of static spacetimes with positive cosmological constant},
  school       = {Scuola Normale Superiore},
  year         = 2018,
url       = {https://drive.google.com/file/d/1A2txIjDlxP2ygPBr0yINkWIhLK9briGc/view}
}

@article {Brendle-alexandrov,
    AUTHOR = {Brendle, S.},
     TITLE = {Constant mean curvature surfaces in warped product manifolds},
   JOURNAL = {Publ. Math. Inst. Hautes \'{E}tudes Sci.},
  FJOURNAL = {Publications Math\'{e}matiques. Institut de Hautes \'{E}tudes
              Scientifiques},
    VOLUME = {117},
      YEAR = {2013},
     PAGES = {247--269},
      ISSN = {0073-8301},
   MRCLASS = {53A10 (53C45)},
  MRNUMBER = {3090261},
MRREVIEWER = {Andrew Bucki},
       DOI = {10.1007/s10240-012-0047-5},
       URL = {https://doi.org/10.1007/s10240-012-0047-5},
}

@article {cat-man-maz,
    AUTHOR = {Catino, G. and Mantegazza, C. and Mazzieri, L.},
     TITLE = {On the global structure of conformal gradient solitons with
              nonnegative {R}icci tensor},
   JOURNAL = {Commun. Contemp. Math.},
  FJOURNAL = {Communications in Contemporary Mathematics},
    VOLUME = {14},
      YEAR = {2012},
    NUMBER = {6},
     PAGES = {1250045, 12},
      ISSN = {0219-1997},
   MRCLASS = {53C44 (53A30 53C24 53C25)},
  MRNUMBER = {2989649},
MRREVIEWER = {Otis Chodosh},
       DOI = {10.1142/S0219199712500459},
       URL = {https://doi.org/10.1142/S0219199712500459},
}

@article {Che-Cold,
    AUTHOR = {Cheeger, J. and Colding, T. H.},
     TITLE = {Lower bounds on {R}icci curvature and the almost rigidity of
              warped products},
   JOURNAL = {Ann. of Math. (2)},
  FJOURNAL = {Annals of Mathematics. Second Series},
    VOLUME = {144},
      YEAR = {1996},
    NUMBER = {1},
     PAGES = {189--237},
      ISSN = {0003-486X},
   MRCLASS = {53C21 (53C20 53C23)},
  MRNUMBER = {1405949},
MRREVIEWER = {Joseph E. Borzellino},
       DOI = {10.2307/2118589},
       URL = {https://doi.org/10.2307/2118589},
}

@article {ciraolo-maggi,
    AUTHOR = {Ciraolo, G. and Maggi, F.},
     TITLE = {On the shape of compact hypersurfaces with almost-constant
              mean curvature},
   JOURNAL = {Comm. Pure Appl. Math.},
  FJOURNAL = {Communications on Pure and Applied Mathematics},
    VOLUME = {70},
      YEAR = {2017},
    NUMBER = {4},
     PAGES = {665--716},
      ISSN = {0010-3640},
   MRCLASS = {53A10 (35J93 52A20 53C42)},
  MRNUMBER = {3628882},
MRREVIEWER = {Fei-Tsen Liang},
       DOI = {10.1002/cpa.21683},
       URL = {https://doi.org/10.1002/cpa.21683},
}

@article {ciraolo-vezzoni-hyperbolic,
    AUTHOR = {Ciraolo, G. and Vezzoni, L.},
     TITLE = {Quantitative stability for hypersurfaces with almost constant
              mean curvature in the hyperbolic space},
   JOURNAL = {Indiana Univ. Math. J.},
  FJOURNAL = {Indiana University Mathematics Journal},
    VOLUME = {69},
      YEAR = {2020},
    NUMBER = {4},
     PAGES = {1105--1153},
      ISSN = {0022-2518},
   MRCLASS = {53A10 (53A35)},
  MRNUMBER = {4124123},
MRREVIEWER = {Jo\~{a}o Lucas Marques Barbosa},
       DOI = {10.1512/iumj.2020.69.7952},
       URL = {https://doi.org/10.1512/iumj.2020.69.7952},
}

@article {ciraolo-vezzoni-alexandrov,
    AUTHOR = {Ciraolo, G. and Vezzoni, L.},
     TITLE = {A sharp quantitative version of {A}lexandrov's theorem via the
              method of moving planes},
   JOURNAL = {J. Eur. Math. Soc. (JEMS)},
  FJOURNAL = {Journal of the European Mathematical Society (JEMS)},
    VOLUME = {20},
      YEAR = {2018},
    NUMBER = {2},
     PAGES = {261--299},
      ISSN = {1435-9855},
   MRCLASS = {53A10 (35B35 35B50 35B51 35J93 53C21)},
  MRNUMBER = {3760295},
MRREVIEWER = {Martin L. P. Kilian},
       DOI = {10.4171/JEMS/766},
       URL = {https://doi.org/10.4171/JEMS/766},
}

@article{ciraolo-roncoroni-spaceforms,
author={Ciraolo, G.
and Roncoroni, A.
and Vezzoni, L.},
title={Quantitative stability for hypersurfaces with almost constant curvature in space forms},
journal={Annali di Matematica Pura ed Applicata (1923 -)},
year={2021},
issn={1618-1891},
doi={10.1007/s10231-021-01069-7},
url={https://doi.org/10.1007/s10231-021-01069-7}
}

@article {Ciraolo-Vezzoni-Serrin,
    AUTHOR = {Ciraolo, G. and Vezzoni, L.},
     TITLE = {On {S}errin's overdetermined problem in space forms},
   JOURNAL = {Manuscripta Math.},
  FJOURNAL = {Manuscripta Mathematica},
    VOLUME = {159},
      YEAR = {2019},
    NUMBER = {3-4},
     PAGES = {445--452},
      ISSN = {0025-2611},
   MRCLASS = {35N25 (35B50 35R01 53C24 58J05)},
  MRNUMBER = {3959271},
MRREVIEWER = {Alberto Parmeggiani},
       DOI = {10.1007/s00229-018-1079-z},
       URL = {https://doi.org/10.1007/s00229-018-1079-z},
}

@Misc{Fog_Maz,
  author        = {M. Fogagnolo and L. Mazzieri},
  title         = {Minimising hulls, p-capacity and isoperimetric inequality on complete Riemannian manifolds},
  year          = {2020},
  archiveprefix = {arXiv},
  eprint        = {2012.09490},
  primaryclass  = {math.DG},
}

@book {Gil_Tru_book,
    AUTHOR = {Gilbarg, D. and Trudinger, N. S.},
     TITLE = {Elliptic partial differential equations of second order},
    SERIES = {Classics in Mathematics},
      NOTE = {Reprint of the 1998 edition},
 PUBLISHER = {Springer-Verlag},
   ADDRESS = {Berlin},
      YEAR = {2001},
     PAGES = {xiv+517},
      ISBN = {3-540-41160-7},
   MRCLASS = {35-02 (35Jxx)},
  MRNUMBER = {1814364 (2001k:35004)},
}

@article {heintze-karcher,
    AUTHOR = {Heintze, E. and Karcher, H.},
     TITLE = {A general comparison theorem with applications to volume
              estimates for submanifolds},
   JOURNAL = {Ann. Sci. \'{E}cole Norm. Sup. (4)},
  FJOURNAL = {Annales Scientifiques de l'\'{E}cole Normale Sup\'{e}rieure. Quatri\`eme
              S\'{e}rie},
    VOLUME = {11},
      YEAR = {1978},
    NUMBER = {4},
     PAGES = {451--470},
      ISSN = {0012-9593},
   MRCLASS = {53C40 (58E10)},
  MRNUMBER = {533065},
MRREVIEWER = {Hubert Gollek},
       URL = {http://www.numdam.org/item?id=ASENS_1978_4_11_4_451_0},
}

@article{Hui_Ilm,
author = "Huisken, G. and Ilmanen, T.",
fjournal = "Journal of Differential Geometry",
journal = "J. Differential Geom.",
number = "3",
pages = "353--437",
publisher = "Lehigh University",
title = "The {I}nverse {M}ean {C}urvature {F}low and the 
{R}iemannian {P}enrose {I}nequality",
url = "http://projecteuclid.org/euclid.jdg/1090349447",
volume = "59",
year = "2001"
}

@article {magnanini-poggesi1,
    AUTHOR = {Magnanini, R. and Poggesi, G.},
     TITLE = {On the stability for {A}lexandrov's soap bubble theorem},
   JOURNAL = {J. Anal. Math.},
  FJOURNAL = {Journal d'Analyse Math\'{e}matique},
    VOLUME = {139},
      YEAR = {2019},
    NUMBER = {1},
     PAGES = {179--205},
      ISSN = {0021-7670},
   MRCLASS = {53A10 (35J93)},
  MRNUMBER = {4041100},
MRREVIEWER = {Jo\~{a}o Lucas Marques Barbosa},
       DOI = {10.1007/s11854-019-0058-y},
       URL = {https://doi.org/10.1007/s11854-019-0058-y},
}

@article {magnanini-poggesi2,
    AUTHOR = {Magnanini, R. and Poggesi, G.},
     TITLE = {Serrin's problem and {A}lexandrov's soap bubble theorem:
              enhanced stability via integral identities},
   JOURNAL = {Indiana Univ. Math. J.},
  FJOURNAL = {Indiana University Mathematics Journal},
    VOLUME = {69},
      YEAR = {2020},
    NUMBER = {4},
     PAGES = {1181--1205},
      ISSN = {0022-2518},
   MRCLASS = {53A10 (58E12)},
  MRNUMBER = {4124125},
MRREVIEWER = {Jo\~{a}o Lucas Marques Barbosa},
       DOI = {10.1512/iumj.2020.69.7925},
       URL = {https://doi.org/10.1512/iumj.2020.69.7925},
}

@article {magnanini-poggesi3,
    AUTHOR = {Magnanini, R. and Poggesi, G.},
     TITLE = {Nearly optimal stability for {S}errin's problem and the soap
              bubble theorem},
   JOURNAL = {Calc. Var. Partial Differential Equations},
  FJOURNAL = {Calculus of Variations and Partial Differential Equations},
    VOLUME = {59},
      YEAR = {2020},
    NUMBER = {1},
     PAGES = {Paper No. 35, 23},
      ISSN = {0944-2669},
   MRCLASS = {35N25 (35A23 35B07 35B35)},
  MRNUMBER = {4054869},
       DOI = {10.1007/s00526-019-1689-7},
       URL = {https://doi.org/10.1007/s00526-019-1689-7},
}

@misc{vesa,
      title={Quantitative Alexandrov theorem and asymptotic behavior of the volume preserving mean curvature flow}, 
      author={Julin, V. and Niinikoski, J.},
      year={2020},
      eprint={2005.13800},
      archivePrefix={arXiv},
      primaryClass={math.AP}
}

@article {li-xia-affine,
    AUTHOR = {Li, J. and Xia, C.},
     TITLE = {An integral formula for affine connections},
   JOURNAL = {J. Geom. Anal.},
  FJOURNAL = {Journal of Geometric Analysis},
    VOLUME = {27},
      YEAR = {2017},
    NUMBER = {3},
     PAGES = {2539--2556},
      ISSN = {1050-6926},
   MRCLASS = {53B05 (53B21 53C21)},
  MRNUMBER = {3667440},
MRREVIEWER = {Leonard Todjihounde},
       DOI = {10.1007/s12220-017-9771-x},
       URL = {https://doi.org/10.1007/s12220-017-9771-x},
}

@misc{mantegazza-distance,
    title={Notes on the Distance Function from a Submanifold-V3},
    author={Mantegazza, C.},
   url={http://cvgmt.sns.it/media/doc/paper/1182/distancenotes.pdf}
   
}

@book {Petersen_book,
    AUTHOR = {Petersen, P.},
     TITLE = {Riemannian geometry},
    SERIES = {Graduate Texts in Mathematics},
    VOLUME = {171},
   EDITION = {Third},
 PUBLISHER = {Springer, Cham},
      YEAR = {2016},
     PAGES = {xviii+499},
      ISBN = {978-3-319-26652-7; 978-3-319-26654-1},
   MRCLASS = {53-01 (53C20 53C21 53C23)},
  MRNUMBER = {3469435},
       DOI = {10.1007/978-3-319-26654-1},
       URL = {https://doi.org/10.1007/978-3-319-26654-1},
}

@article {qiu-xia,
    AUTHOR = {Qiu, G. and Xia, C.},
     TITLE = {A generalization of {R}eilly's formula and its applications to
              a new {H}eintze-{K}archer type inequality},
   JOURNAL = {Int. Math. Res. Not. IMRN},
  FJOURNAL = {International Mathematics Research Notices. IMRN},
      YEAR = {2015},
    NUMBER = {17},
     PAGES = {7608--7619},
      ISSN = {1073-7928},
   MRCLASS = {53C20 (53C42)},
  MRNUMBER = {3403995},
MRREVIEWER = {Yijun He},
       DOI = {10.1093/imrn/rnu184},
       URL = {https://doi.org/10.1093/imrn/rnu184},
}

@article {qiu-xia-pfunz,
    AUTHOR = {Qiu, G. and Xia, C.},
     TITLE = {Overdetermined boundary value problems in {$\Bbb S^n$}},
   JOURNAL = {J. Math. Study},
  FJOURNAL = {Journal of Mathematical Study. Shuxue Yanjiu},
    VOLUME = {50},
      YEAR = {2017},
    NUMBER = {2},
     PAGES = {165--173},
      ISSN = {1006-6837},
   MRCLASS = {35N25 (31B20 35B06 35B50 35R01)},
  MRNUMBER = {3663321},
       DOI = {10.4208/jms.v50n2.17.03},
       URL = {https://doi.org/10.4208/jms.v50n2.17.03},
}

@article {reilly-originale,
    AUTHOR = {Reilly, R. C.},
     TITLE = {Applications of the {H}essian operator in a {R}iemannian
              manifold},
   JOURNAL = {Indiana Univ. Math. J.},
  FJOURNAL = {Indiana University Mathematics Journal},
    VOLUME = {26},
      YEAR = {1977},
    NUMBER = {3},
     PAGES = {459--472},
      ISSN = {0022-2518},
   MRCLASS = {53C40},
  MRNUMBER = {474149},
MRREVIEWER = {N. J. Hicks},
       DOI = {10.1512/iumj.1977.26.26036},
       URL = {https://doi.org/10.1512/iumj.1977.26.26036},
}

@article {reilly-pfunz,
    AUTHOR = {Reilly, R. C.},
     TITLE = {Mean curvature, the {L}aplacian, and soap bubbles},
   JOURNAL = {Amer. Math. Monthly},
  FJOURNAL = {American Mathematical Monthly},
    VOLUME = {89},
      YEAR = {1982},
    NUMBER = {3},
     PAGES = {180--188, 197--198},
      ISSN = {0002-9890},
   MRCLASS = {53A10 (49F10 53C45)},
  MRNUMBER = {645791},
MRREVIEWER = {Hansklaus Rummler},
       DOI = {10.2307/2320201},
       URL = {https://doi.org/10.2307/2320201},
}

@article {ros,
    AUTHOR = {Ros, A.},
     TITLE = {Compact hypersurfaces with constant higher order mean
              curvatures},
   JOURNAL = {Rev. Mat. Iberoamericana},
  FJOURNAL = {Revista Matem\'{a}tica Iberoamericana},
    VOLUME = {3},
      YEAR = {1987},
    NUMBER = {3-4},
     PAGES = {447--453},
      ISSN = {0213-2230},
   MRCLASS = {53C42 (53C45)},
  MRNUMBER = {996826},
MRREVIEWER = {Franki Dillen},
       DOI = {10.4171/RMI/58},
       URL = {https://doi.org/10.4171/RMI/58},
}

@article {Serrin_ARMA,
    AUTHOR = {Serrin, J.},
     TITLE = {A symmetry problem in potential theory},
   JOURNAL = {Arch. Rational Mech. Anal.},
  FJOURNAL = {Archive for Rational Mechanics and Analysis},
    VOLUME = {43},
      YEAR = {1971},
     PAGES = {304--318},
      ISSN = {0003-9527},
   MRCLASS = {31B20},
  MRNUMBER = {0333220 (48 \#11545)},
MRREVIEWER = {Gottfried Anger},
}

@article{sternberg-williams,
title = "C1,1-regularity of constrained area minimizing hypersurfaces",
journal = "Journal of Differential Equations",
volume = "94",
number = "1",
pages = "83 - 94",
year = "1991",
issn = "0022-0396",
doi = "https://doi.org/10.1016/0022-0396(91)90104-H",
url = "http://www.sciencedirect.com/science/article/pii/002203969190104H",
author = "P. Sternberg and W. P. Ziemer and G. Williams"
}

@misc{scheuer-alexandrov,
      title={Stability from rigidity via umbilicity}, 
      author={Scheuer, J.},
      year={2021},
      eprint={2103.07178},
      archivePrefix={arXiv},
      primaryClass={math.AP}
}

@article {Wei_ARMA,
    AUTHOR = {Weinberger, H. F.},
     TITLE = {Remark on the preceding paper of {S}errin},
   JOURNAL = {Arch. Rational Mech. Anal.},
  FJOURNAL = {Archive for Rational Mechanics and Analysis},
    VOLUME = {43},
      YEAR = {1971},
     PAGES = {319--320},
      ISSN = {0003-9527},
   MRCLASS = {31B20},
  MRNUMBER = {0333221 (48 \#11546)},
MRREVIEWER = {Gottfried Anger},
}

@article {wang-alexandrov,
    AUTHOR = {Wang, M.-T. and Wang, Y.-K. and Zhang, X.},
     TITLE = {Minkowski formulae and {A}lexandrov theorems in spacetime},
   JOURNAL = {J. Differential Geom.},
  FJOURNAL = {Journal of Differential Geometry},
    VOLUME = {105},
      YEAR = {2017},
    NUMBER = {2},
     PAGES = {249--290},
      ISSN = {0022-040X},
   MRCLASS = {53C42 (53C50 83C20)},
  MRNUMBER = {3606730},
MRREVIEWER = {Wei-huan Chen},
       URL = {http://projecteuclid.org/euclid.jdg/1486522815},
}

@article {wang-brendle,
    AUTHOR = {Wang, X. and Wang, Y.-K.},
     TITLE = {Brendle's inequality on static manifolds},
   JOURNAL = {J. Geom. Anal.},
  FJOURNAL = {Journal of Geometric Analysis},
    VOLUME = {28},
      YEAR = {2018},
    NUMBER = {1},
     PAGES = {152--169},
      ISSN = {1050-6926},
   MRCLASS = {53C21 (83C20)},
  MRNUMBER = {3745853},
MRREVIEWER = {Changwei Xiong},
       DOI = {10.1007/s12220-017-9814-3},
       URL = {https://doi.org/10.1007/s12220-017-9814-3},
}

@article {Xia-substatic,
    AUTHOR = {Li, J. and Xia, C.},
     TITLE = {An integral formula and its applications on sub-static
              manifolds},
   JOURNAL = {J. Differential Geom.},
  FJOURNAL = {Journal of Differential Geometry},
    VOLUME = {113},
      YEAR = {2019},
    NUMBER = {3},
     PAGES = {493--518},
      ISSN = {0022-040X},
   MRCLASS = {53C40 (53C21)},
  MRNUMBER = {4031740},
MRREVIEWER = {Theodoros Vlachos},
       DOI = {10.4310/jdg/1573786972},
       URL = {https://doi.org/10.4310/jdg/1573786972},
}

\end{document}